\newtheorem{theorem}[equation]{Theorem}
\newtheorem{lemma}[equation]{Lemma}
\newtheorem{definition}[equation]{Definition}
\theoremstyle{remark}
\newtheorem{remark}[equation]{Remark}
\numberwithin{equation}{section}
\def\vint_#1{\mathchoice%
          {\mathop{\kern 0.2em\vrule width 0.6em height 0.69678ex depth -0.58065ex
                  \kern -0.8em \intop}\nolimits_{\kern -0.4em#1}}%
          {\mathop{\kern 0.1em\vrule width 0.5em height 0.69678ex depth -0.60387ex
                  \kern -0.6em \intop}\nolimits_{#1}}%
          {\mathop{\kern 0.1em\vrule width 0.5em height 0.69678ex
              depth -0.60387ex
                  \kern -0.6em \intop}\nolimits_{#1}}%
          {\mathop{\kern 0.1em\vrule width 0.5em height 0.69678ex depth -0.60387ex
                  \kern -0.6em \intop}\nolimits_{#1}}}
\def\vintslides_#1{\mathchoice%
          {\mathop{\kern 0.1em\vrule width 0.5em height 0.697ex depth -0.581ex
                  \kern -0.6em \intop}\nolimits_{\kern -0.4em#1}}%
          {\mathop{\kern 0.1em\vrule width 0.3em height 0.697ex depth -0.604ex
                  \kern -0.4em \intop}\nolimits_{#1}}%
          {\mathop{\kern 0.1em\vrule width 0.3em height 0.697ex depth -0.604ex
                  \kern -0.4em \intop}\nolimits_{#1}}%
          {\mathop{\kern 0.1em\vrule width 0.3em height 0.697ex depth -0.604ex
                  \kern -0.4em \intop}\nolimits_{#1}}}
\newcommand{\aveint}[2]{\mathchoice%
          {\mathop{\kern 0.2em\vrule width 0.6em height 0.69678ex depth -0.58065ex
                  \kern -0.8em \intop}\nolimits_{\kern -0.45em#1}^{#2}}%
          {\mathop{\kern 0.1em\vrule width 0.5em height 0.69678ex depth -0.60387ex
                  \kern -0.6em \intop}\nolimits_{#1}^{#2}}%
          {\mathop{\kern 0.1em\vrule width 0.5em height 0.69678ex depth -0.60387ex
                  \kern -0.6em \intop}\nolimits_{#1}^{#2}}%
          {\mathop{\kern 0.1em\vrule width 0.5em height 0.69678ex depth -0.60387ex
                  \kern -0.6em \intop}\nolimits_{#1}^{#2}}}
\newcommand{\vp}{\varphi}
\newcommand{\eps}{\varepsilon}
\newcommand{\R}{\mathbb{R}}
\newcommand{\Rn}{\mathbb{R}^n}
\newcommand{\dist}{\operatorname{dist}}
\newcommand{\norm}[1]{\left|\left| #1 \right|\right|}
\newcommand{\abs}[1]{\left| #1 \right|}
\newcommand{\inprod}[1]{\langle #1\rangle}
\renewcommand{\limsup}{\operatornamewithlimits{lim \, sup}}
\renewcommand{\liminf}{\operatornamewithlimits{lim \, inf}}
\newcommand{\esssup}{\operatornamewithlimits{ess\, sup}}
\newcommand{\spt}{\operatorname{spt}}
\newcommand{\divt}{\operatorname{div}}
\newcommand{\parts}[2]{\frac{\partial {#1}}{\partial {#2}}}
\newcommand{\pdo}[2]{\frac{\partial #1}{\partial #2}}
\newcommand{\trm}{\textrm}
\newcommand{\beq}{\begin{equation}}
\newcommand{\eeq}{\end{equation}}
\newcommand{\ud}{\, d}
\newcommand{\Om}{\Omega}
\newcommand{\ol}{\overline}
\newcommand{\B}{\ensuremath{\mathcal{B}}}
\begin{document}

\title{Stability of degenerate parabolic Cauchy problems}

\author{Teemu Lukkari and Mikko Parviainen}

\thanks{Part of this research was carried out at the Institut
  Mittag-Leffler (Djursholm). MP is supported by the Academy of Finland.}

\subjclass[2010]{35K55, 35K15, 35K65} 

\keywords{nonlinear parabolic equations, initial value problems,
  stability, Cauchy problems, Barenblatt solutions}

\begin{abstract}
  We prove that solutions to Cauchy problems related to the
  $p$-parabolic equations are stable with respect to the nonlinearity
  exponent $p$.  More specifically, solutions with a fixed initial
  trace converge in an $L^q$-space to a solution of the limit
  problem as $p>2$ varies.
\end{abstract}

\address[Teemu Lukkari]{Department of Mathematics and Statistics, P.O. Box 35 (MaD),
FI-40014 University of Jyv\"askyl\"a, Finland}
\email{teemu.j.lukkari@jyu.fi}

\address[Mikko Parviainen]{Department of Mathematics and Statistics,
  P.O. Box 35 (MaD), FI-40014 University of Jyv\"askyl\"a, Finland}
\email{mikko.j.parviainen@jyu.fi}

\date{\the\day.\the\month.\the\year}
\maketitle

\section{Introduction}

We study the stability of solutions to the degenerate parabolic
equation
\begin{equation}
  \label{eq:eq-intro}
  \partial_t u-\mathrm{div}(\abs{\nabla u}^{p-2}\nabla u)=0
\end{equation}
with respect to perturbations in the nonlinearity power $p$.  The main
issue we address is whether solutions converge in some sense to the
solution of the limit problem as the nonlinearity power $p$ varies.
In applications, parameters like $p$ are often only known
approximately, for instance from experiments.  Thus it is natural to
ask whether solutions are sensitive to small variations in such
parameters or not.

The stability of Dirichlet problems on bounded domains for
\eqref{eq:eq-intro} is detailed in \cite{kinnunenp10}.  See also
\cite{LqvistStab, LiMartio, LqvistRayleigh} for elliptic equations
similar to the $p$-Laplacian, \cite{lukkari} for the porous medium
and fast diffusion equations, and \cite{fujishimahkm} for parabolic quasiminimizers.  Here we focus on the Cauchy problem
\begin{displaymath}
  \begin{cases}
    \partial_t u-\mathrm{div}(\abs{\nabla u}^{p-2}\nabla u)=0,&
    \text{in }\R^n\times (0,\infty)\\
    u(x,0)=\nu
  \end{cases}
\end{displaymath}
with $p>2$. The initial trace $\nu$ can be a positive measure with
compact support and finite total mass, and the interpretation of the
initial condition is that the initial values is attained in the sense
of distributions. See Section \ref{sec:preliminaries} below for the
details.  For the existence theory of this initial value problem, we
refer to \cite{dibenedettoh89}.

An important example of solutions to the Cauchy problems is given by
the celebrated \emph{Barenblatt solutions}. These functions are the
nonlinear counterparts of the fundamental solution, and they are
defined for positive times $t>0$ by the formula
\begin{displaymath}
  \B_p(x,t)=t^{-n/\lambda}\left(C-\frac{p-2}{2}\lambda^{\frac{1}{1-p}}
    \left(\frac{\abs{x}}{t^{1/\lambda}}\right)^{\frac{p}{p-1}}\right)_+^{\frac{p-1}{p-2}},
\end{displaymath}
where $\lambda=n(p-2)+p$.  The constant $C$ is at our disposal. We
choose the value for which
\begin{displaymath}
  \int_{\R^n} \B_p(x,t)\,dx=1 \quad \text{for all}\quad t>0.
\end{displaymath}
With this normalization, one easily checks that
\begin{displaymath}
  \lim_{t\to 0} \B_p(\cdot,t)=\delta_0,
\end{displaymath}
in the sense of distributions, where $\delta_0$ is Dirac's delta at
the origin of $\R^n$. Further, the function $\B_p$ is the unique
solution to the Cauchy problem with this initial trace, see
\cite{kaminvaz88}.

Our main result states that solutions and their gradients converge to
a limit function in an $L^q$-space, locally in space and up to the
initial time, as $p$ varies. The limit function turns out to be a weak
solution to the equation involving the limit exponent, and it also has
the correct initial trace.  In particular, since the Barenblatt
solutions are the unique solutions to the respective Cauchy problems,
our stability theorem contains the fact that
\begin{equation}\label{eq:barenblatt-conv}
  \B_{\widetilde{p}}\to \B_{p} \quad\text{as} \quad \widetilde{p}\to p
  \quad\text{for} \quad p>2
\end{equation}
in the above sense.  The admissible exponents in the norms are sharp
in the sense that for larger exponents, the norms of the Barenblatt
solution and its gradient are no longer finite.  For general initial
measures, uniqueness for the Cauchy problem remains open, and we only
get a subsequence converging to \emph{some} solution of the limit
problem. If the initial trace is an $L^1$ function, then the solution
to the limit problem is unique, and the whole sequence converges.

The proof of our stability result is based on deriving suitable
estimates for solutions and their gradients. We use the truncation
device commonly employed in measure data problems for this purpose,
see e.g.~\cite{boccardodgo97}. This works well, since the mass of the
initial data has the same role in the estimates as the mass of a right
hand side source term. Convergence of the solutions is established by
employing the ideas of \cite{kinnunenp10} and a localization
argument. Here we encounter a difficulty: changing the growth exponent
$p$ changes the space to which solutions belong. To deal with this, we
need to use a local Reverse H\"older inequality for the gradient. Such
a result can be found in \cite{kinnunenl00}.

The paper is organized as follows. In Section~\ref{sec:preliminaries},
we present the necessary preparatory material, including the parabolic
Sobolev spaces to which weak solutions belong, and a rigorous
description of what is meant by the Cauchy problem in this context. In
Section \ref{sec:prop}, we prove a result concerning the propagation
properties of solutions. These properties are then employed in
Section~\ref{section:a priori estimates} in proving the estimates
necessary for our stability result. Finally, in
Section~\ref{sec:stability} we prove the main theorem about stability.

\section{Preliminaries} \label{sec:preliminaries}

We denote by 
$$
L^p(t_1,t_2;W^{1,p}(\Omega)),
$$
where $t_1<t_2$, the parabolic Sobolev space. This space consists of
measurable functions $u$ such that for almost every $t$, $t_1 < t <
t_2$, the function $x \mapsto u(x,t)$ belongs to $W^{1,p}(\Omega)$ and
the norm
$$
\left(\int_{t_1}^{t_2}\int_\Omega |u(x,t)|^{p} + |\nabla
u(x,t)|^{p}\,dx \,dt\right)^{1/p}
$$
is finite.  The definitions for
$L_{loc}^p(t_1,t_2;W_{loc}^{1,p}(\Omega))$ and
$L^p(t_1,t_2;W_{0}^{1,p}(\Omega))$ are analogous. The space
$C((t_1,t_2);L^q(\Omega))$, $q=1,2$, comprises of all the functions
$u$ such that for $t_1<s,t<t_2$, we have
\[
\int_{\Omega} |u(x,t)-u(x,s)|^q \, dx  \to 0
\]
as $s \to t$. The notation $U \Subset \Omega$ means that $U$ is a
bounded subset of $\Omega$ and the closure of $U$ belongs to
$\Omega$. We also denote $U_{\eps}=U\times(\eps,\infty)$.  We study
the Cauchy problem
\begin{equation} \label{eq:CP-trace}
\begin{cases}
\operatorname{div}\big( \abs{\nabla u}^{p-2}\nabla u\big)&=
\pdo{u}{t},\qquad  \textrm{in} \quad
\R^n \times (0,\infty),\\
 u(\cdot,0) & = \nu, 
\end{cases}
\end{equation}
where $\nu$ is a compactly supported positive Radon measure. For
simplicity, this measure is fixed throughout the paper. To be more
precise, the solution $u\ge 0$ satisfies the initial condition in the
sense of distributions, meaning that
\[
\lim_{t\to 0} \int_{\R^n}  u (x,t) \varphi(x) \, dx  = \int_{\R^n} \varphi(x) \, d\nu(x)
\]
for all $\vp \in C_0^\infty(\R^n)$. Further, $u$ is a weak solution
for positive times in the sense of the following definition.
\begin{definition} \label{def of weak sol}
A function
\[
u \in L^p_{\text{loc}}(0,\infty \, ; \, W^{1,p}_{\text{loc}}(\R^n))
\]
is a weak solution in $\R^n\times (0,\infty)$ if it satisfies
\begin{equation} \label{weak_solution}
\begin{split}
\int_{0}^{\infty}\int_{\Omega} \abs{\nabla u}^{p-2}\nabla u
\cdot \nabla\phi \, dx  \,dt -
\int_{0}^{\infty}\int_{\Omega} u
\frac{\partial\phi}{\partial t} \, dx \,dt
\: = & \: 0
\end{split}
\end{equation}
for every test function $\phi \in
C_0^\infty(\Rn\times(0,\infty))$.
\end{definition}

Definition of a solution can be also written in the form
\begin{equation} \label{weak_solutionb}
\begin{split}
\int_{\tau_1}^{\tau_2}&\int_{\Omega} \abs{\nabla u}^{p-2}\nabla u
\cdot \nabla\phi \, dx  \,dt -
\int_{\tau_1}^{\tau_2}\int_{\Omega} u
\frac{\partial\phi}{\partial t} \, dx \,dt
\\
&+ \int_{\Omega} u(x,\tau_2) \phi(x,\tau_2) \, dx  -
\int_{\Omega} u(x,\tau_1) \phi(x,\tau_1) \, dx
\: = \: 0
\end{split}
\end{equation}
for a test function $\phi \in C_0^\infty(\Rn\times(0,\infty))$ and for
almost every $0<\tau_1<\tau_2<\infty$.

Our assumptions imply that the Cauchy problem has a solution, see
\cite{dibenedettoh89} and \cite{kuusip09}. These solutions are
constructed by approximating the initial trace by regular functions,
proving uniform estimates, and then passing to the limit by
compactness arguments. 
\begin{remark}\label{rem:cpsol}
  By a solution to the Cauchy problem \eqref{eq:CP-trace} we mean any
  solution obtained as a limit of approximating the initial trace
  $\nu$ via standard mollifiers. The reason for this is that we want
  to choose the approximations so that their support does not expand
  much.
\end{remark}

The above definition does not include a time derivative of $u$ in any
sense. Nevertheless, we would like to employ test functions depending
on $u$, and thus the time derivative of $u$ inevitably appears. We
deal with this defect by using the convolution
\begin{equation}
\label{eq:naumann-convolution}
\begin{split}
u_\sigma(x,t)=\frac{1}{\sigma}\int_\eps^t e^{(s-t)/\sigma} u(x,s) \ud s,
\end{split}
\end{equation}
defined for $t\ge \eps>0$. This is expedient for our purpose; see for
example \cite{naumann84}, \cite{boccardodgo97}, and
\cite{kinnunenl06}. In the above, $\eps>0$ is a Lebesgue instant of
$u$. The reason for using a positive number $\eps$ instead of zero is
that solutions are not a priori assumed to be integrable up to the
initial time.

The convolution \eqref{eq:naumann-convolution} has the following
properties.
\begin{lemma}
\label{lem:naumann}
\begin{enumerate}[(i)]
\item \label{item:time-derivative-of-naumann}
If $u \in L^p(\Om_{\eps})$, then
\[
\begin{split}
\norm{u_\sigma}_{L^p(\Om_{\eps})}\leq \norm{u}_{L^p(\Om_{\eps})},
\end{split}
\]
\[
\begin{split}
\parts{u_\sigma}{t}=\frac{u-u_\sigma}{\sigma}\in L^p(\Om_{\eps}),
\end{split}
\]
and \[
\begin{split}
u_\sigma\to u \quad \trm{in} \quad L^p(\Om_{\eps})\quad \trm{as} \quad \sigma\to 0.
\end{split}
\]
\item If $\nabla u\in L^p(\Om_{\eps})$, then $\nabla u_\sigma=(\nabla u)_\sigma$ componentwise,
\[
\begin{split}
\norm{\nabla u_\sigma}_{L^p(\Om_{\eps})}\leq \norm{\nabla u}_{L^p(\Om_{\eps})},
\end{split}
\]
and
\[
\begin{split}
\nabla u_\sigma\to \nabla u \quad \trm{in} \quad L^p(\Om_{\eps})\quad \trm{as} \quad \sigma\to 0.
\end{split}
\]
\item
Furthermore, if $u^k\to u$ in $L^p(\Om_{\eps})$, then also
\[
\begin{split}
u^k_\sigma\to u_\sigma\quad \trm{and}\quad \parts{u^k_\sigma}{t}\to \parts{u_\sigma}{t}
\end{split}
\]
in $L^p(\Om_{\eps})$.
\item If $\nabla u^k\to \nabla u$ in $L^p(\Om_{\eps})$, then $\nabla u^k_\sigma\to \nabla u_\sigma$ in $L^p(\Om_{\eps})$.
\item Analogous results hold for the weak convergence in $L^p(\Om_{\eps})$.
\item Finally, if $\vp\in C(\ol \Om_{\eps})$, then
\[
\begin{split}
\vp_\sigma(x,t) +e^{- \frac{t-\eps}{\sigma}} \vp(x,\eps) \to \vp(x,t)
\end{split}
\]
\emph{uniformly} in $\Om_{\eps}$ as $\sigma\to 0$.
\end{enumerate}
\end{lemma}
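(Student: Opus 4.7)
The plan is to treat the six items in sequence, with most following from direct computation with the definition and standard approximate-identity arguments; only item (vi) requires genuinely new bookkeeping.

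For item (i), rewrite
\[
u_\sigma(x,t)=\int_0^{t-\eps}K_\sigma(\tau)\,u(x,t-\tau)\,d\tau,\qquad K_\sigma(\tau)=\tfrac{1}{\sigma}e^{-\tau/\sigma},
\]
and observe that $\int_0^{t-\eps}K_\sigma(\tau)\,d\tau=1-e^{-(t-\eps)/\sigma}\le 1$. Jensen's inequality at fixed $(x,t)$ combined with Fubini gives $\norm{u_\sigma}_{L^p(\Om_\eps)}\le\norm{u}_{L^p(\Om_\eps)}$. The time-derivative identity $\partial_t u_\sigma=(u-u_\sigma)/\sigma$ follows by applying the Leibniz rule to the definition and regrouping. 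For the $L^p$-convergence $u_\sigma\to u$, first verify it on continuous, compactly supported $u$, where $K_\sigma$ is a one-sided approximate identity concentrating at $\tau=0$, and then extend to general $u\in L^p(\Om_\eps)$ by density using the uniform norm bound. Item (ii) is immediate: $K_\sigma$ is independent of $x$, so spatial differentiation commutes with the time convolution, and (i) applies componentwise.

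Items (iii) and (iv) are consequences of the linearity and $L^p$-boundedness of $u\mapsto u_\sigma$: the $L^p$-norm of $u_\sigma^k-u_\sigma=(u^k-u)_\sigma$ is controlled by $\norm{u^k-u}_{L^p(\Om_\eps)}$, and the time-derivative convergence follows from the formula $\partial_t v_\sigma=(v-v_\sigma)/\sigma$ together with the convergences of $v$ and $v_\sigma$; the gradient case uses (ii). For the weak-convergence version (v), identify the formal adjoint of $u\mapsto u_\sigma$ by Fubini:
\[
\int_{\Om_\eps}u_\sigma\psi\,dx\,dt=\int_{\Om_\eps}u\,\psi^{\ast}_\sigma\,dx\,dt,\qquad \psi^{\ast}_\sigma(x,s)=\tfrac{1}{\sigma}\int_s^\infty e^{(s-t)/\sigma}\psi(x,t)\,dt.
\]
The same estimate as in (i) shows $\psi\mapsto\psi^{\ast}_\sigma$ is bounded on $L^{p'}(\Om_\eps)$, so testing against $\psi\in L^{p'}(\Om_\eps)$ reduces the weak convergence of $u_\sigma^k$ to that of $u^k$.

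The only subtle item is (vi). Using $\tfrac{1}{\sigma}\int_\eps^t e^{(s-t)/\sigma}\,ds=1-e^{-(t-\eps)/\sigma}$ to rebalance the kernel, write
\[
\vp_\sigma(x,t)+e^{-(t-\eps)/\sigma}\vp(x,\eps)-\vp(x,t)=\frac{1}{\sigma}\int_\eps^t e^{(s-t)/\sigma}(\vp(x,s)-\vp(x,t))\,ds+e^{-(t-\eps)/\sigma}(\vp(x,\eps)-\vp(x,t)).
\]
The correction term $e^{-(t-\eps)/\sigma}\vp(x,\eps)$ is designed to compensate exactly for the mass $e^{-(t-\eps)/\sigma}$ missing from the truncated exponential kernel at the initial time. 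To make both right-hand terms uniformly small in $\Om_\eps$, fix $\eta>0$, choose $\delta>0$ so that the modulus of continuity of $\vp$ is $\le\eta$ on a $\delta$-neighbourhood in $t$, and split: on $t\in[\eps,\eps+\delta]$ each term is dominated directly by the modulus of continuity; on $t>\eps+\delta$ the explicit correction is at most $e^{-\delta/\sigma}\cdot 2\norm{\vp}_\infty$, while the convolution integral is split again at an inner time scale $\delta'$, the part near $\tau=0$ being controlled by the modulus of continuity and the tail by the exponential decay of $K_\sigma$. The main obstacle lies in this last splitting and in the implicit requirement that $\vp$ be uniformly continuous and bounded on $\ol\Om_\eps$, which in the intended applications is guaranteed by choosing $\vp$ with compact support.
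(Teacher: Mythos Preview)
The paper does not prove this lemma; it is stated as a collection of known properties of the exponential time mollifier, with citations to Naumann, Boccardo--Dall'Aglio--Gallou\"et--Orsina, and Kinnunen--Lindqvist, and no argument is given. Your proof is correct and follows the standard route one would find in those references: the norm bound in (i) via Jensen/H\"older plus Fubini, the derivative identity by Leibniz, convergence by approximate-identity arguments and density, (ii)--(iv) by linearity and commutation with $\nabla$, (v) by the duality/adjoint computation, and (vi) by the kernel-mass identity $\tfrac{1}{\sigma}\int_\eps^t e^{(s-t)/\sigma}\,ds=1-e^{-(t-\eps)/\sigma}$ together with a modulus-of-continuity splitting.

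Your closing remark on (vi) is well placed: as stated, $\vp\in C(\ol\Om_\eps)$ with $\Om_\eps=\Omega\times(\eps,\infty)$ does not by itself yield uniform continuity or boundedness, so the uniform conclusion really uses that in every application in the paper $\vp$ is compactly supported (or one restricts to a bounded time interval). This is a genuine hypothesis that the lemma's statement leaves implicit, and you are right to flag it rather than paper over it.
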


To derive the necessary estimates, we need the equation satisfied by
the mollified solution $u_\sigma$. By using \eqref{weak_solutionb}, we
may write
\[
\begin{split}
\int_{s}^{T-\eps}&\int_{\Rn} \abs{\nabla u(x,t+\eps-s)}^{p-2}\nabla u(x,t+\eps-s)
\cdot \nabla\phi(x,t+\eps) \, dx  \,dt \\
&\hspace{1 em}-
\int_{s}^{T-\eps}\int_{\Rn} u(x,t+\eps-s)
\frac{\partial\phi(x,t+\eps)}{\partial t} \, dx \,dt
\\
&\hspace{1 em}+ \int_{\Rn} u(x,T-s)\phi(x,T) \, dx\\
& =
\int_{\Rn} u(x,\eps) \phi(x,s+\eps) \, dx
\end{split}
\]
when $0\leq s\leq T-\eps$. Notice that $t+\eps-s\in [\eps,T-s] $. Then we
multiply the above inequality by $e^{-s/\sigma}/\sigma$, integrate
over $[0,T-\eps]$ with respect to $s$, change the order of integration
using $\int_0^{T-\eps}\int_{s}^{T-\eps} \ldots \ud t \ud
s=\int_0^{T-\eps} \int_0^t \ldots \ud s \ud t$, and finally perform a
change of variables $s_{\text{new}}=s+\eps, t_{\text{new}}=t+\eps$, and for simplicity denote the new variables also with $s$ and $t$ . We end up with
\begin{equation}
\label{eq:reg}
\begin{split}
\int_{\eps}^{T}&\int_{\Rn} (\abs{\nabla u(x,t)}^{p-2}\nabla u(x,t))_\sigma
\cdot \nabla\phi(x,t) \, dx  \,dt \\
&-
\int_{\eps}^{T}\int_{\Rn} u(x,t)_{\sigma}
\frac{\partial\phi(x,t)}{\partial t} \, dx \,dt
+ \int_{\Rn} u(x,T)_{\sigma} \phi(x,T) \, dx\\
& =
\int_{\Rn} u(x,\eps) \int_\eps^{T}\phi(x,s) e^{-(s-\eps)/\sigma}/\sigma\,\ud s\ dx.
\end{split}
\end{equation}
It is often convenient to write the last two terms on the right as 
\begin{displaymath}
  \begin{aligned}
    -\int_{\eps}^{T}\int_{\Rn} u(x,t)_{\sigma}
    \frac{\partial\phi(x,t)}{\partial t} \, dx \,dt&
    + \int_{\Rn} u(x,T)_{\sigma} \phi(x,T) \, dx\\&=\int_{\eps}^T\int_{\R^n}
    \frac{\partial u _\sigma (x,t) }{\partial t}\phi(x,t)\,dx\,dt,
  \end{aligned}
\end{displaymath}
where we integrated by parts in time and used the fact that
$u_\sigma(x,\eps)$ vanishes by the properties of the mollifier.

\section{Propagation properties}
\label{sec:prop}

In this section, we prove a finite propagation result for the Cauchy
problem. More specifically, if the initial trace has compact support,
then this property is preserved in the time evolution, meaning that
$x\mapsto u(x,t)$ also has compact support for positive times $t$.
This is in principle well known, but it seems hard to find a
convenient reference covering the case when the initial trace is a
measure. For stronger initial conditions, see for example Theorem 3.1
in \cite{kuusip09}.

 Here we use a comparison with an explicit solution and  adapt the arguments for the porous medium
equation in \cite{VB}, see in particular Lemma~14.5 and
Proposition~14.24.
We start by extracting a family of barrier functions from the
Barenblatt solution
\begin{displaymath}
  \B_p(x,t)=t^{-n/\lambda}\left(C-\frac{p-2}{2}\lambda^{1/(1-p)}
    \left(\frac{\abs{x}}{t^{1/\lambda}}\right)^{p/(p-1)}\right)_+^{(p-1)/(p-2)},
\end{displaymath}
where $\lambda=n(p-2)+p$. The Barenblatt solution is of the form
\begin{displaymath}
  \B_p(x,t)=F(t,\abs{x}^{p/(p-1)})
\end{displaymath}
for a suitable function $F$. From this, we see that the function
\begin{displaymath}
  v(x,t)=F(T-t,-\abs{x-x_1}^{p/(p-1)})
\end{displaymath}
is also a weak solution, since both the terms in the $p$-parabolic equation change
sign. With the choice $C=0$ and some simplifications, we get a weak solution
\begin{equation}\label{eq:barrier}
  v(x,t)=c(n,p)(T-t)^{-\frac{1}{p-2}}\abs{x-x_1}^{\frac{p}{p-2}}.
\end{equation}

The first result concerns finite propagation for bounded initial data.
\begin{theorem}\label{prop:bounded-propagation}
  Let $u$ be the solution to the Cauchy problem 
  \begin{displaymath}
    \begin{cases}
      u_t-\divt(\abs{\nabla u}^{p-2}\nabla u)=0,\\
      u(x,0)=u_0(x)
    \end{cases}
  \end{displaymath}
  where 
  \begin{displaymath}
    u_0\geq 0 \quad\text{and} \quad\norm{u_0}_{\infty}=H<\infty.
  \end{displaymath}
  Assume that 
  there is a point $x_0\in \R^n$ and a radius $R>0$ such that
  \begin{displaymath}
    u_0(x)=0 \quad\text{for all}\quad x\in B(x_0,R).
  \end{displaymath}
  Then there exists a nonnegative function $R(t)$ such that 
   \begin{displaymath}
    R(t)\geq R-c(n,p)H^{\frac{p-2}{p}}t^{1/p},
  \end{displaymath}
  and
  \begin{displaymath}
    u(x,t)=0 \quad\text{for all}\quad x\in B(x_0,R(t)).
  \end{displaymath}
 \end{theorem}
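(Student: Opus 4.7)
My plan is to use the backward Barenblatt barrier $v$ from \eqref{eq:barrier} together with a weak parabolic comparison principle on a suitable cylinder, in the spirit of Lemma~14.5 of \cite{VB} for the porous medium equation.

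Fix $T>0$ and a target point $y\in\R^n$ with $|y-x_0| < R - c(n,p) H^{(p-2)/p} T^{1/p}$; the goal is to show $u(y,T)=0$. I take $x_1 := y$ in \eqref{eq:barrier} and introduce a small time shift to keep the barrier bounded on $[0,T]$:
$$
v_\delta(x,t) \;:=\; c(n,p)(T+\delta-t)^{-1/(p-2)}\, |x-y|^{p/(p-2)}, \qquad \delta>0.
$$
The function $v_\delta$ is still a weak solution of the $p$-parabolic equation, since the sign-reversal observation preceding \eqref{eq:barrier} is unaffected by a translation in time. Setting $\rho := R - |y-x_0|$, the ball $B(y,\rho)$ lies inside $B(x_0,R)$, so $u_0 \equiv 0$ on $B(y,\rho)$. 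I would work on the cylinder $Q := B(y,\rho)\times(0,T)$.

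The next step is to verify $v_\delta \ge u$ on the parabolic boundary of $Q$. The bottom face is immediate, since $u_0 = 0 \le v_\delta(\cdot,0)$. On the lateral face $\{|x-y|=\rho\}\times[0,T]$,
$$
v_\delta(x,t) \;\ge\; c(n,p)(T+\delta)^{-1/(p-2)}\, \rho^{p/(p-2)},
$$
and the standing hypothesis $\rho > c(n,p) H^{(p-2)/p} T^{1/p}$ (for a suitably calibrated constant) forces the right-hand side to exceed $H \ge u$ once $\delta$ is small enough. A weak comparison principle applied to $u$ and $v_\delta$ on $Q$ then yields $u\le v_\delta$ in $Q$. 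Evaluating at $(y,T)$ gives $u(y,T) \le v_\delta(y,T) = c(n,p)\delta^{-1/(p-2)}\cdot 0 = 0$, and $u\ge 0$ upgrades this to $u(y,T)=0$. Ranging over admissible $y$ completes the proof with $R(t) := R - c(n,p) H^{(p-2)/p} t^{1/p}$.

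The main obstacle is the weak comparison principle itself: by Remark~\ref{rem:cpsol} the solution $u$ is merely a distributional solution of \eqref{eq:CP-trace} and has no a priori time derivative, while $v_\delta$ is smooth up to $t=T$. I would establish comparison by subtracting the weak formulation \eqref{weak_solutionb} for $u$ and for $v_\delta$, testing against a truncation of $(u - v_\delta)_+$, and regularizing in time via the mollification $u_\sigma$ from \eqref{eq:naumann-convolution} and Lemma~\ref{lem:naumann}. The lateral bound $v_\delta\ge u$ on $\partial B(y,\rho)$ ensures $(u-v_\delta)_+$ has compact spatial support inside $B(y,\rho)$, eliminating the lateral boundary terms; monotonicity of the map $\xi\mapsto|\xi|^{p-2}\xi$ then produces an energy estimate forcing $(u-v_\delta)_+\equiv 0$ in $Q$. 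A final appeal to the local Hölder continuity of bounded weak solutions converts this almost-everywhere comparison into the pointwise conclusion at $(y,T)$.
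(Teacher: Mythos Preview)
Your proposal is correct and follows essentially the same route as the paper: compare $u$ with the explicit backward Barenblatt barrier \eqref{eq:barrier} on a cylinder where $u_0$ vanishes, check the parabolic boundary inequalities using $0\le u\le H$, and read off $u=0$ at the centre. The paper carries out the comparison on the fixed ball $B(x_0,R)$ rather than your smaller ball $B(y,\rho)$, and omits both the $\delta$-shift and the explicit discussion of the weak comparison principle, but these are cosmetic differences rather than a different strategy.
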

 \begin{remark}
   It can be shown that the function $R(t)$ is nonincreasing, but we
   do not need this fact here.  In
   Theorems~\ref{prop:bounded-propagation}
   and~\ref{thm:general-propagation}, it can happen that $R(t)=0$
   after some time instant.
 \end{remark}
\begin{proof}
  By the comparison principle, we have $0\leq u\leq H$. Pick a point
  $x_1\in B(x_0,R)$ and denote 
  \begin{displaymath}
    d(x_1)=\dist(x_1,\partial B(x_0,R))=R-\abs{x_1-x_0}.
  \end{displaymath}

  We derive an estimate for the time $T$ up to which $u(x_1,T)=0$ by
  comparing $u$ with the barrier function $v$ given by
  \eqref{eq:barrier} in $B(x_0,R)\times(0,T)$. At the initial time
  zero, $u\leq v$ since $u$ vanishes. On the lateral boundary
  $\partial B(x_0,R)\times(0,T)$ we have $u\leq v$ if
  \begin{equation}\label{eq:comparison-cond}
    H\leq c T^{-\frac{1}{p-2}}d(x_1)^{\frac{p}{p-2}}.
  \end{equation}
  Choose now $T$ small enough so that equality holds in
  \eqref{eq:comparison-cond}. Then
  \begin{displaymath}
    d(x_1)=cH^{\frac{p-2}{p}}T^{1/p},
  \end{displaymath}
  from which we get
  \begin{displaymath}
    \abs{x_1-x_0}=R-cH^{\frac{p-2}{p}}T^{1/p}.
  \end{displaymath}

  The choice of $T$ made above is admissible in \eqref{eq:comparison-cond} 
  for all $\tilde{x}$ such that $d(\tilde{x})\geq d(x_1)$.  Thus
  $u(\tilde x,t)=0$ for all $0\leq t\leq T$ and $\tilde x\in
  B(x_0,R(T))$ where
  \begin{displaymath}
    R(T)\geq R-cH^{\frac{p-2}{p}}T^{1/p},
  \end{displaymath}
  as desired.
\end{proof}

The next step is to generalize the above result to the case of initial
data with finite mass. For this purpose, we need the so-called
$L^1-L^\infty$ regularizing effect, i.e. the estimate
\begin{displaymath}
  \norm{u(\cdot,t)}_{\infty}\leq c(n,p)\nu(\R^n)^{\sigma}t^{-\alpha}.
\end{displaymath}
Here $\lambda = n(p-2)+p$, $\alpha=n/\lambda$, and $\sigma=p/\lambda$,
see Theorem 2.1 in \cite{dibenedetto93} on page 318. The proof is an
iteration of the estimate in Theorem \ref{prop:bounded-propagation}
with the help of the regularizing effect.

\begin{theorem}\label{thm:general-propagation}
  Assume that $\nu$ is a positive measure with $\nu(\R^n)<\infty$, and
  that there is a point $x_0\in \R^n$, a radius $R>0$, and a number
  $\varepsilon>0$ such that
  \begin{displaymath}
    \nu(B(x_0,R+\varepsilon))=0.
  \end{displaymath}
  Then there is a solution $u$ to the Cauchy problem
  \begin{displaymath}
    \begin{cases}
      u_t-\divt(\abs{\nabla u}^{p-2}\nabla u)=0,\\
      u(x,0)=\nu
    \end{cases}
  \end{displaymath}
  Then there exists a nonnegative function $R(t)$ such that
  \begin{displaymath}
    R(t)\geq R-c(n,p,\nu) t^{\gamma},
  \end{displaymath}
  and
  \begin{displaymath}
    u(x,t)=0 \quad\text{for all}\quad x\in B(x_0,R(t))
  \end{displaymath}
  where
  \begin{displaymath}
    \gamma=\frac{1}{p}\left(1-\frac{n(p-2)}{n(p-2)+p}\right)\quad
    \text{and}\quad 
    c(n,p,\nu)=c(n,p)\nu(\R^n)^{\sigma(p-2)/p}.
  \end{displaymath}
\end{theorem}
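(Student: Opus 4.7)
The plan is to iterate Theorem~\ref{prop:bounded-propagation} over a dyadic sequence of times, using the $L^1$--$L^\infty$ regularizing effect to control $\|u(\cdot,s)\|_{\infty}$ at each scale. Let $u^j$ be the approximating solutions from Remark~\ref{rem:cpsol}, obtained by mollifying $\nu$ with a standard mollifier of radius $\delta_j<\varepsilon/2$. Then $u^j_0$ is bounded, satisfies $\|u^j_0\|_{L^1(\R^n)}\le\nu(\R^n)$, and has support contained in $\R^n\setminus B(x_0,R+\varepsilon/2)$. Applied to each $u^j$, the smoothing estimate yields the uniform bound $\|u^j(\cdot,s)\|_{\infty}\le c(n,p)\,\nu(\R^n)^{\sigma}s^{-\alpha}$ for every $s>0$.

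Fix $t>0$ and set $s_k=2^{-k}t$ for $k=0,1,\dots,N$. By downward induction on $k$ starting from $k=N$, I show that $u^j(\cdot,s_k)$ vanishes on some ball $B(x_0,R_k)$. For the base case, Theorem~\ref{prop:bounded-propagation} applied on $[0,s_N]$ with $H=\|u^j_0\|_{\infty}$ gives
\[
R_N\ge R+\tfrac{\varepsilon}{2}-c\,\|u^j_0\|_{\infty}^{(p-2)/p}s_N^{1/p};
\]
for fixed $j$, I take $N=N(j)$ large enough that the subtracted term is at most $\varepsilon/4$. For the induction step, I apply Theorem~\ref{prop:bounded-propagation} to the time-shifted solution $(x,\tau)\mapsto u^j(x,\tau+s_{k+1})$ on $[0,s_k-s_{k+1}]$ with $H_{k+1}=c\,\nu(\R^n)^{\sigma}s_{k+1}^{-\alpha}$ to obtain
\[
R_k\ge R_{k+1}-c\,H_{k+1}^{(p-2)/p}(s_k-s_{k+1})^{1/p}.
\]
Since $s_k-s_{k+1}=s_{k+1}=2^{-k-1}t$, each increment equals $c\,\nu(\R^n)^{\sigma(p-2)/p}(2^{-k-1}t)^{\gamma}$, where the exponent simplifies exactly to $\gamma=\frac{1}{p}\bigl(1-\frac{n(p-2)}{n(p-2)+p}\bigr)$. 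Telescoping from $k=N-1$ down to $k=0$ and summing the resulting geometric series in $k$ gives, independently of $N$,
\[
R(t)=R_0\ge R+\tfrac{\varepsilon}{4}-c(n,p)\,\nu(\R^n)^{\sigma(p-2)/p}\,t^{\gamma}.
\]
Because the right-hand side no longer involves $j$, the vanishing of $u^j(\cdot,t)$ on $B(x_0,R(t))$ transfers to $u$ as $j\to\infty$ along the defining sequence of Remark~\ref{rem:cpsol}.

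The main obstacle is the base interval $[0,s_N]$: the only $L^\infty$ control available for $u^j_0$ is $\|u^j_0\|_{\infty}\le c\,\delta_j^{-n}\nu(\R^n)$, which diverges as $\delta_j\to 0$. The remedy is the order of quantifiers---fix $j$ first, then choose $N=N(j)$ large enough to absorb this divergent constant into the $\varepsilon/4$ loss, and only afterwards pass to the limit $j\to\infty$. All subsequent intervals are uniform in $j$, since the regularizing effect at time $s_{k+1}>0$ depends only on $\nu(\R^n)$ and $s_{k+1}$.
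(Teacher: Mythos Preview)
Your argument is correct and follows essentially the same approach as the paper: dyadic decomposition of $(0,t)$, the $L^1$--$L^\infty$ regularizing effect to bound $\|u(\cdot,s_k)\|_\infty$, iteration of Theorem~\ref{prop:bounded-propagation} on each dyadic interval, and summation of the resulting geometric series. The only presentational difference is in the treatment of the initial scale. The paper works first with a fixed bounded $u_0$ vanishing on $B(x_0,R)$, iterates over \emph{all} $k\ge 0$ (so the sum is infinite and $R(t_k)\to R(0)=R$ as $k\to\infty$, which is legitimate since $H=\|u_0\|_\infty$ is finite and Theorem~\ref{prop:bounded-propagation} on $[0,t_k]$ forces $R(t_k)\to R$), and only afterwards passes to the measure data by mollification. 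You instead carry the mollification index $j$ through the iteration and absorb the divergent $\|u_0^j\|_\infty$ into an $\varepsilon/4$ loss by choosing $N=N(j)$; this is a bit more bookkeeping but equally valid, and has the minor advantage of making the role of the $\varepsilon$ in the hypothesis $\nu(B(x_0,R+\varepsilon))=0$ explicit.
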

\begin{proof}
  We derive the estimate for the case where the initial trace is a
  bounded function $u_0$ such that
  \begin{displaymath}
    u_0(x)=0 \quad\text{for all}\quad x\in B(x_0,R).
  \end{displaymath}
  The general case then follows by mollifying the initial trace $\nu$
  so that the above assumption holds for the approximations, and
  passing to the limit. Indeed, the estimate depends on the initial
  trace only via its total mass, so the result holds also for the
  limit function.

  Fix a small $t>0$ and let $t_k=2^{-k}t$ for $k=0,1,2,\ldots$. Then
  $t_{k-1}=2t_k$ and $t_{k-1}-t_k=t_k$. Denote $
  H_k=\norm{u(\cdot,t_k)}_\infty$; then
  \begin{displaymath}
    H_k^{(p-2)/p}\leq c\nu(\R^n)^{\sigma(p-2)/p} t^{-\alpha(p-2)/p}2^{k\alpha(p-2)/p}
  \end{displaymath}
  by the regularizing effect. Further, we have
  \begin{displaymath}
    t_k^{1/p}=c2^{-k/p}t^{1/p}.
  \end{displaymath}
  
  We combine the estimate of Theorem \ref{prop:bounded-propagation}
  over the time interval $(t_{k+1},t_k)$ and the last two display
  formulas to get
  \begin{displaymath}
    R(t_k)\geq R(t_{k+1})-c\nu(\R^n)^{\sigma(p-2)/p}t^{\gamma}2^{-k\gamma}
  \end{displaymath}
  where we used the fact that
  \begin{displaymath}
    \frac{1}{p}-\alpha\frac{p-2}{p}=
    \frac{1}{p}\left(1-\frac{n(p-2)}{n(p-2)+p}\right)=\gamma>0.
  \end{displaymath}
  Iteration of the above estimate leads to
  \begin{displaymath}
    R(t)=R(t_0)\geq 
    R(0)-c(n,p,\nu)t^{\gamma}
    \sum_{k=1}^{\infty} 2^{-k\gamma}.
  \end{displaymath}
  The series is convergent since $\gamma$ is positive, and the proof
  is complete.
\end{proof}

\begin{remark}\label{rem:cptsupport}
  We will use Theorem \ref{thm:general-propagation} in the following
  way: given a time $T$ and an initial trace $\nu$ with compact
  support, we can find a bounded open set $U$ such that the support of
  $u(\cdot,t)$ is contained in $U$ for all $0< t\leq T$. To see this,
  note that we may choose the point $x_0$ in Theorem
  \ref{thm:general-propagation} to be arbitrarily far away from the
  support of $\nu$.
\end{remark}

\section{A priori estimates} \label{section:a priori estimates}

In this section, we derive some estimates we will employ for our
stability result. More specifically, we estimate certain $L^q$-norms
of solutions and their gradients locally in space, up to the initial
time.  We begin with a simple lemma for passing up to the initial time
in our estimates.
\begin{lemma}\label{lem:initial-estimate}
  Let $u$ be a solution to the Cauchy problem with initial trace
  $\nu$, and let $U\Subset \R^n$ be an open set. Then
  \begin{displaymath}
    \limsup_{\eps\to 0}\int_{U}u(x,\eps)\, dx\leq \nu(\overline{U}).
  \end{displaymath}
\end{lemma}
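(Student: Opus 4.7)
The plan is to use the distributional attainment of the initial data together with a standard approximation of the indicator $\chi_{\overline{U}}$ by smooth test functions from above.

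First, I would fix an auxiliary sequence $\vp_k \in C_0^\infty(\R^n)$ with $0\leq \vp_k\leq 1$, $\vp_k\equiv 1$ on $\overline{U}$, and $\supp \vp_k\subset U_k:=\{x:\dist(x,\overline{U})<1/k\}$. Since $u(\cdot,\eps)\geq 0$ and $\vp_k\geq \chi_U$, we have
\begin{displaymath}
\int_{U}u(x,\eps)\,dx \leq \int_{\R^n} u(x,\eps)\vp_k(x)\,dx.
\end{displaymath}
Taking the $\limsup$ as $\eps\to 0$ and using the fact that the initial condition is attained in the distributional sense (applied to the test function $\vp_k \in C_0^\infty(\R^n)$), the right-hand side converges to $\int_{\R^n}\vp_k\,d\nu$. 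Hence
\begin{displaymath}
\limsup_{\eps\to 0}\int_U u(x,\eps)\,dx \leq \int_{\R^n}\vp_k\,d\nu \leq \nu(U_k).
\end{displaymath}

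Second, I would let $k\to\infty$. Since $U_k\searrow \overline{U}$ (the intersection being exactly $\overline{U}$) and $\nu$ is a finite Radon measure, continuity of the measure from above gives $\nu(U_k)\to \nu(\overline{U})$. Combining this with the previous display yields the claimed inequality.

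The argument is essentially routine; the only point that needs a little care is the choice of the smooth cutoff $\vp_k$, which must lie in $C_0^\infty$ so that the distributional initial condition is applicable, and the monotone decreasing neighborhood $U_k$ so that $\nu(U_k)\to \nu(\overline{U})$ rather than $\nu(U)$. This explains why the right-hand side of the estimate involves $\overline{U}$ and not $U$ itself, and it is really the only substantive step in the proof.
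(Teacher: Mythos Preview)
Your proof is correct and is essentially the same as the paper's: the paper identifies $u(\cdot,\eps)$ with a measure $\nu_\eps$, notes that $\nu_\eps\to\nu$ weakly, and then invokes the upper semicontinuity $\limsup_{\eps\to 0}\nu_\eps(\overline U)\le \nu(\overline U)$ as a black-box property of weak convergence of measures. Your cutoff argument with $\vp_k$ and the continuity of $\nu$ from above is precisely the standard proof of that Portmanteau-type inequality, so you have simply made explicit what the paper cites.
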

\begin{proof}
  Since $u$ is a local weak solution for positive times, it is also
  continuous in time with values in $L^2_{loc}(\R^n)$ on
  $(0,\infty)$. Thus $x\mapsto u(x,\eps)$ is a locally integrable
  function for \emph{every} $\eps>0$, and we may identify it with the
  measure defined by
  \begin{displaymath}
    \nu_\eps(E)=\int_E u(x,\eps)\,dx.
  \end{displaymath}
  Observe that we need the continuity in time only for strictly
  positive times.  Now the fact that $\nu$ is the initial trace of $u$
  means exactly that $\nu_\eps\to \nu$ in the sense of weak
  convergence of measures. Thus
  \begin{displaymath}
    \limsup_{\eps \to 0}\int_U u(x,\eps)\,dx\leq \limsup_{\eps \to
      0}\int_{\overline{U}}u(x,\eps)\,dx \leq \nu(\overline{U})
  \end{displaymath}
  by the properties of weak convergence of measures.
\end{proof}

\begin{lemma}\label{lem:gradient-est}
  Let $u$ be a solution to \eqref{eq:CP-trace}. Then there is a
  constant $c=c(n,p)$ such that
  \begin{align*}
    \sup_{0<t<T}\int_U \min(u,j)^2\ud x
    &+\int_{U_T}\abs{\nabla \min(u,j)}^p\ud x\ud t\le cj \nu(\Rn).
  \end{align*}
\end{lemma}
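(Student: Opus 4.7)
The plan is a standard truncation argument of Boccardo--Dall'Aglio--Gallou\"et--Orsina type, adapted to the Naumann mollification setting used in this paper. Set $T_j(s)=\min(s,j)$ and $\Phi_j(s)=\int_0^s T_j(r)\,dr$, so that
\begin{equation*}
\tfrac{1}{2}T_j(s)^2 \le \Phi_j(s)\le j\,s \quad\text{for } s\ge 0,
\end{equation*}
and aim to test the equation with (a mollified version of) $T_j(u)$.

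First I would use the finite propagation result (Theorem~\ref{thm:general-propagation} together with Remark~\ref{rem:cptsupport}) to fix a bounded open set $V$, which we may take to contain $U$, such that $\spt u(\cdot,t)\subset V$ for every $0<t\le T$. This makes $T_j(u_\sigma)$ compactly supported in space, so after the usual approximation it is an admissible test function in the mollified identity \eqref{eq:reg} (with the upper limit $T$ replaced by a generic $\tau\in(\eps,T)$).

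Next I would substitute $\phi=T_j(u_\sigma)$ into \eqref{eq:reg}. Using the chain rule $\partial_t u_\sigma\cdot T_j(u_\sigma)=\partial_t\Phi_j(u_\sigma)$, the integration-by-parts identity displayed just before Section~\ref{sec:prop}, and the fact that $u_\sigma(\cdot,\eps)=0$, the parabolic term becomes $\int_{\R^n}\Phi_j(u_\sigma(x,\tau))\,dx$. On the right-hand side of \eqref{eq:reg}, the bound $T_j\le j$ together with $\int_\eps^\tau e^{-(s-\eps)/\sigma}/\sigma\,ds\le 1$ yields an upper bound of $j\int_V u(x,\eps)\,dx$.

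I would then send $\sigma\to 0$. By Lemma~\ref{lem:naumann} we have $u_\sigma\to u$ and $\nabla u_\sigma\to\nabla u$ in $L^p$, and $(\abs{\nabla u}^{p-2}\nabla u)_\sigma\to \abs{\nabla u}^{p-2}\nabla u$ in $L^{p/(p-1)}$; combined with $\nabla T_j(u_\sigma)=\mathbf{1}_{\{u_\sigma<j\}}\nabla u_\sigma$ and the Sobolev fact that $\nabla u=0$ a.e.\ on $\{u=j\}$, the gradient term converges to $\int_\eps^\tau\!\int \abs{\nabla T_j(u)}^p\,dx\,dt$. Pointwise convergence (up to a subsequence) of $u_\sigma(\cdot,\tau)$ for a.e.\ $\tau$ allows us to pass to the limit in $\int\Phi_j(u_\sigma(\cdot,\tau))\,dx$. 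Applying $\Phi_j\ge \tfrac12 T_j^2$, restricting the spatial integration to $U$, taking the supremum over $\tau\in(\eps,T)$, and finally letting $\eps\to 0$ via Lemma~\ref{lem:initial-estimate} applied to $V$ (noting $\nu(\overline V)\le\nu(\R^n)$) delivers the claimed inequality with an absolute constant.

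The main technical obstacle is the identification
\begin{equation*}
\lim_{\sigma\to 0}\int_\eps^\tau\!\!\int_{\R^n} (\abs{\nabla u}^{p-2}\nabla u)_\sigma\cdot\nabla T_j(u_\sigma)\,dx\,dt = \int_\eps^\tau\!\!\int_{\R^n} \abs{\nabla T_j(u)}^p\,dx\,dt,
\end{equation*}
which requires careful handling near the level set $\{u=j\}$ where $T_j'$ jumps; the strong $L^p$ convergence provided by Lemma~\ref{lem:naumann}(ii) is exactly what makes this work. A minor bookkeeping issue is that $u$ is only defined as a limit of approximations (Remark~\ref{rem:cpsol}); one can first prove the estimate at the approximating level, where the above calculations are unambiguous, and then pass to the limit using lower semicontinuity of the left-hand side and weak convergence of the initial measures.
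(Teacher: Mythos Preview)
Your proposal is correct and follows the same overall strategy as the paper: test the mollified equation \eqref{eq:reg} with the truncation, use the primitive $\Phi_j$ to handle the parabolic term, estimate the right-hand side by $j\int u(\cdot,\eps)$, and close with Lemma~\ref{lem:initial-estimate}.

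The one noteworthy difference is the choice of test function. You insert $\phi=T_j(u_\sigma)$, which makes the chain rule in the time derivative exact but forces you to justify the convergence
\[
\int(\abs{\nabla u}^{p-2}\nabla u)_\sigma\cdot\nabla T_j(u_\sigma)\,\longrightarrow\,\int\abs{\nabla T_j(u)}^p
\]
as $\sigma\to 0$, which (as you note) requires some care near $\{u=j\}$. The paper instead inserts $\phi=T_j(u)$; the gradient term is then immediately $\int\abs{\nabla T_j(u)}^p$ after letting $\sigma\to 0$, with no level-set issue. The price is that the parabolic term becomes $\int\partial_t u_\sigma\cdot T_j(u)$, which is handled by the one-line monotonicity observation
\[
\partial_t u_\sigma\bigl(T_j(u)-T_j(u_\sigma)\bigr)=\tfrac{1}{\sigma}(u-u_\sigma)\bigl(T_j(u)-T_j(u_\sigma)\bigr)\ge 0,
\]
so that $\partial_t u_\sigma\,T_j(u)\ge \partial_t u_\sigma\,T_j(u_\sigma)=\partial_t\Phi_j(u_\sigma)$. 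This sign trick, specific to the Naumann convolution, sidesteps exactly the ``main technical obstacle'' you identify, and is worth knowing.
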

\begin{remark}\label{rem:gradient-est}
  We can replace the left hand side by 
  \begin{displaymath}
    \sup_{\eps<t<T}\int_U \min(u,j)^2\ud x
    +\int_{U_{\eps,T}}\abs{\nabla \min(u,j)}^p\ud x\ud t
  \end{displaymath}
  by a trivial estimate.
\end{remark}
\begin{proof}
  We test the regularized equation with $\varphi= u_j=\min(u,j)$. This
  is an admissible test function, since $u$ has a 'finite speed of
  propagation' i.e. for a given time $T$, there is $U$ such that
  $u(\cdot, t)$ is supported on $U$, see Remark
  \ref{rem:cptsupport}. We get
  \begin{align*}
    \int_{U_{\eps,T}}\frac{\partial u_\sigma}{\partial t}u_j
    &+(\abs{\nabla u}^{p-2}\nabla u)_\sigma \cdot\nabla u_j \ud x\ud t\\
    = & \int_U u(x,\eps)\left(\frac{1}{\sigma}\int_\eps^Te^{-(s-\eps)/\sigma}
      u_j(x,s)\ud s\right)\ud x.
  \end{align*}
  
  We need to eliminate the time derivative in the first term on the left. 
  To accomplish this, we note that
  \begin{align*}
    \frac{\partial u_\sigma}{\partial t}u_j=&
     \frac{\partial u_\sigma}{\partial t}\min(u_\sigma,j)+
    \frac{\partial u_\sigma}{\partial t}(\min(u,j)-\min(u_\sigma,j))\\
    \geq & \frac{\partial u_\sigma}{\partial t}\min(u_\sigma,j),
  \end{align*}
  so that this term is estimated from below by
  \[
\begin{split}
\int_{U_{\eps,T}}\frac{\partial u_\sigma}{\partial t}u_j \ud x \ud t&=     \int_{U_{\eps,T}}\frac{\partial }{\partial t}\int_0^{u_{\sigma}(x,t)}\min(s,j) \ud s\, \ud t\ud x\\
     &=
    \int_{U} F(u_\sigma(x,T))\ud x-\int_{U} F(u_\sigma(x,\eps))\ud x
\end{split}
\]
  where 
  \begin{displaymath}
    F(t)=\int_0^t\min(s,j)\ud s=
    \begin{cases}
      \frac{1}{2}t^2, & t\leq j, \\
      \frac{1}{2}j^2+jt-j^2\geq \frac{1}{2}j^2, &t>j.
    \end{cases}
  \end{displaymath}
  In particular, $F(t)\geq \frac{1}{2}\min(t,j)^2$.
  Since $u_\sigma(x,\eps)=0$, we get 
  \begin{displaymath}
    \int_{U_{\eps,T}}\frac{\partial u_\sigma}{\partial t}u_j\ud x\ud t\geq 
    \int_{U}\frac{1}{2}\min(u_\sigma,j)^2(x,T)\ud x.
  \end{displaymath}
  
  The time derivative has been eliminated, so we may pass to the limit
  $\sigma \to 0$. We take this limit and get
  \begin{align*}
    \frac{1}{2}\int_U&u_j^2(x,T)\ud x
    +\int_{U_{\eps,T}}\abs{\nabla u_j}^p\ud x\ud t \leq \int_U u(x,\eps) u_j(x,\eps)\ud x.
  \end{align*}
  We then use the fact that $u_j\leq j$
  \begin{align*}
    \int_U & u_j^2(x,T)\ud x
    +\int_{U_{\eps,T}}\abs{\nabla u_j}^p \ud x\ud t \leq c j \int_U u(x,\eps)\ud x.
  \end{align*}
  Then pass to the limit $\eps\to 0$; the right hand side is bounded
  by $cj\nu(\overline{U})$ by Lemma~\ref{lem:initial-estimate}, so we
  get the desired quantity to the right hand side by a trivial
  estimate.
  
  The proof is then completed by replacing $T$ in the above by $\tau$
  chosen so that
  \begin{displaymath}
    \int_U u_j^2(x,\tau)\ud x\geq 
    \frac{1}{2}\esssup_{0<t<T}\int_U u_j^2(x,t)\ud x.
  \end{displaymath}
  This leads to an estimate for the supremum in terms of the right
  hand side in the claim. 
\end{proof}

We need the following well known Sobolev type inequality.
\begin{lemma}\label{lem:sobo}
  Let $u\in L^p(\eps,T;W^{1,p}_0(U))$. Then
  \begin{displaymath}
    \int_{U_{\eps,T}}\abs{u}^{\kappa p}\ud x\ud t\leq c\int_{U_{\eps,T}}\abs{\nabla u}^p\ud x\ud t
    \left(\esssup_{\eps<t<T} \int_U u^2(x,t)\ud x\right)^{p/n},
  \end{displaymath}
  where
  \begin{displaymath}
    \kappa =\frac{n+2}{n}.
  \end{displaymath}
\end{lemma}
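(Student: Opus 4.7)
The plan is to derive the inequality by proving the corresponding spatial bound at each fixed time $t\in(\eps,T)$, then integrating in $t$ and pulling out the $L^2$-norm as an essential supremum. This is the standard route to parabolic Sobolev inequalities, and the key is the exponent identity $\kappa p=p+2p/n$, which allows the factorization $\abs{u}^{\kappa p}=\abs{u}^{p}\cdot\abs{u}^{2p/n}$.

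For almost every $t\in(\eps,T)$, $u(\cdot,t)\in W^{1,p}_{0}(U)$. Assume first the subcritical case $p<n$ so that the Sobolev exponent $p^{\ast}=np/(n-p)$ is finite. I would apply H\"older's inequality to the above factorization with conjugate exponents $s=n/(n-p)$ and $s'=n/p$. Using $ps=p^{\ast}$, $(2p/n)s'=2$, and $(n-p)/n=p/p^{\ast}$, this yields
\begin{equation*}
\int_{U}\abs{u(x,t)}^{\kappa p}\ud x\le\Bigl(\int_{U}\abs{u(x,t)}^{p^{\ast}}\ud x\Bigr)^{p/p^{\ast}}\Bigl(\int_{U}u^{2}(x,t)\ud x\Bigr)^{p/n}.
\end{equation*}
The classical Sobolev inequality on $W^{1,p}_{0}(U)$ then bounds the first factor on the right by $c\int_{U}\abs{\nabla u(x,t)}^{p}\ud x$.

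Next I would integrate this pointwise-in-$t$ estimate over $(\eps,T)$ and replace the $L^2$-factor by its essential supremum, using the trivial bound
\begin{equation*}
\Bigl(\int_{U}u^{2}(x,t)\ud x\Bigr)^{p/n}\le\Bigl(\esssup_{\eps<t<T}\int_{U}u^{2}(x,t)\ud x\Bigr)^{p/n}.
\end{equation*}
Since the remaining time integrand is $\int_U \abs{\nabla u(x,t)}^p \ud x$, the result is exactly the inequality claimed in the lemma.

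The only technical point worth mentioning is the treatment of the critical/supercritical case $p\ge n$, where $p^{\ast}$ ceases to be finite and the H\"older step as written breaks down. Here I would repeat the argument using the Sobolev embedding $W^{1,p}_{0}(U)\hookrightarrow L^{q}(U)$ for any sufficiently large finite $q$ (valid for every finite $q$ if $p=n$ and with $q=\infty$ if $p>n$), and then choose the H\"older exponents accordingly. The value $\kappa=(n+2)/n$ is precisely the one for which the algebra delivers the powers $p$ and $2p/n$ on the right-hand side, independently of which Sobolev target is used, so no fine-tuning of the embedding is needed.
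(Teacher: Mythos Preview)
The paper states this lemma as well known and gives no proof, so there is nothing to compare your argument against. Your approach---factor $\abs{u}^{\kappa p}=\abs{u}^{p}\abs{u}^{2p/n}$, apply H\"older with exponents $n/(n-p)$ and $n/p$, invoke the Sobolev inequality $\|u\|_{p^{\ast}}\le c\|\nabla u\|_{p}$, and integrate in $t$---is the standard one and is correct when $p<n$.

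Your treatment of the case $p\ge n$ is a bit too breezy, however. The sentence ``no fine-tuning of the embedding is needed'' is not quite right: if $p=n$ one would need the H\"older exponent $s'=n/p=1$, and if $p>n$ the straightforward use of $\|u\|_{\infty}\le c\|\nabla u\|_{p}$ produces the wrong powers (you get $\|\nabla u\|_{p}^{\kappa p-2}\|u\|_{2}^{2}$ rather than $\|\nabla u\|_{p}^{p}\|u\|_{2}^{2p/n}$). The clean unified route is to invoke the Gagliardo--Nirenberg interpolation inequality
\[
\|u(\cdot,t)\|_{L^{\kappa p}(U)}\le c\,\|\nabla u(\cdot,t)\|_{L^{p}(U)}^{\theta}\,\|u(\cdot,t)\|_{L^{2}(U)}^{1-\theta},\qquad \theta=\frac{n}{n+2},
\]
valid for all $p>1$ and $u\in W^{1,p}_{0}(U)$; raising to the power $\kappa p$ gives precisely $\|\nabla u\|_{p}^{p}\,(\int_{U}u^{2})^{p/n}$, and then your integration-in-time step finishes the proof. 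Alternatively one can apply the $W^{1,1}_{0}\to L^{n/(n-1)}$ Sobolev inequality to a suitable power of $\abs{u}$ and then use H\"older, which is how Gagliardo--Nirenberg is usually proved and what DiBenedetto does in the reference \cite{dibenedetto93}. Either way, the fix is routine; your overall strategy is sound.
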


Next we show that solutions to the Cauchy problem, as well as their
gradients, are integrable to certain powers. The proof is based on
estimating the decay of certain level sets by applying the previous
two lemmas.  This is optimal as can be seen from the Barenblatt
solution. The essential point is that the constant on the right can be
chosen to be independent of $u$.
\begin{theorem}\label{thm:gradient-est}
  Let $u$ be a solution to \eqref{eq:CP-trace}. Then
  \[
\begin{split}
  \int_0^T\int_{\Rn} &u^q \ud x \ud t\leq  C(\nu,p,q)<\infty  \quad \text{whenever} \quad q<p-1+\frac{p}{n},\text{ and}\\
  \int_0^T\int_{\Rn}&\abs{\nabla u}^q \ud x \ud t\leq  C(\nu,p,q)<\infty  \quad \text{whenever} \quad q<p-1+\frac{1}{n+1}.
\end{split}
\]

Above the constant remains bounded when $p\to p_0>2$. 
\end{theorem}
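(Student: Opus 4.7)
By the finite propagation result (Remark~\ref{rem:cptsupport}), we can choose a bounded open set $U$ containing the support of $u(\cdot,t)$ for all $0<t\leq T$, so that the integrals over $\R^n$ reduce to integrals over $U_T=U\times(0,T)$. All subsequent bounds will be expressed in terms of $j$ and $\nu(\R^n)$, hence depending on $p$ only through explicit continuous exponents, which gives the claimed uniformity as $p\to p_0>2$.

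The first step is to combine Lemma~\ref{lem:gradient-est} with the Sobolev-type inequality (Lemma~\ref{lem:sobo}) applied to $u_j=\min(u,j)$, which lies in $L^p(\eps,T;W^{1,p}_0(U))$ after extending by zero. This yields, with $\kappa=(n+2)/n$,
\begin{equation*}
  \int_{U_T}u_j^{p\kappa}\,dx\,dt \le c\,\bigl(j\nu(\R^n)\bigr)^{1+p/n}.
\end{equation*}
Since $u_j=j$ on the superlevel set $A_j=\{u>j\}\cap U_T$, Chebyshev yields
\begin{equation*}
  \lvert A_j\rvert \le c\,\nu(\R^n)^{1+p/n}\,j^{-(p-1+p/n)},
\end{equation*}
using the identity $p\kappa-(1+p/n)=p-1+p/n$.

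Given this level-set decay, the first claim follows by a standard dyadic/layer-cake argument: split $U_T=\{u\le 1\}\cup\bigcup_{k\ge 0}\{2^k<u\le 2^{k+1}\}$ and bound $\int u^q\le\lvert U_T\rvert + c\sum_{k\ge 0}2^{q(k+1)}\lvert A_{2^k}\rvert$. The geometric series converges precisely when $q<p-1+p/n$.

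For the gradient estimate, I would decompose in the same dyadic way and apply Hölder on each annulus $E_k=\{2^k<u\le 2^{k+1}\}$:
\begin{equation*}
  \int_{E_k}\lvert\nabla u\rvert^q\,dx\,dt \le \Bigl(\int_{E_k}\lvert\nabla u\rvert^p\Bigr)^{q/p}\lvert E_k\rvert^{1-q/p}.
\end{equation*}
Since $\nabla u=\nabla u_{2^{k+1}}$ on $\{u\le 2^{k+1}\}$, Lemma~\ref{lem:gradient-est} gives $\int_{E_k}\lvert\nabla u\rvert^p\le c\,2^{k+1}\nu(\R^n)$, and $\lvert E_k\rvert\le\lvert A_{2^k}\rvert\le c\,2^{-k(p-1+p/n)}$. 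Putting these together,
\begin{equation*}
  \int_{E_k}\lvert\nabla u\rvert^q \le c\,2^{k\bigl(\tfrac{q}{p}-(p-1+\tfrac{p}{n})(1-\tfrac{q}{p})\bigr)},
\end{equation*}
and summing over $k$ converges iff $q/p-(p-1+p/n)(1-q/p)<0$. A direct computation shows this is equivalent to $q<p(p-1+p/n)/(p+p/n)=p-n/(n+1)=p-1+1/(n+1)$, which is the asserted threshold.

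The main technical point is the balance in the Hölder step for the gradient: one has to trade the $L^p$ gradient bound (linear in $j$) against the level-set measure (polynomial decay in $j$) in a way that the resulting geometric series converges exactly at the sharp exponent. The uniformity of the constants as $p\to p_0>2$ is automatic, since all intermediate constants in Lemmas~\ref{lem:gradient-est} and~\ref{lem:sobo} depend continuously on $p$ (which stays strictly above $2$), and only the exponents $p-1+p/n$, $p-1+1/(n+1)$ enter nontrivially, both of which are continuous in $p$.
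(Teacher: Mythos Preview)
Your proof is correct and follows essentially the same approach as the paper: combine Lemma~\ref{lem:gradient-est} with the parabolic Sobolev inequality (Lemma~\ref{lem:sobo}) to obtain the level-set decay $\lvert\{u>j\}\rvert\le cj^{1-p-p/n}$, then sum dyadically for $u^q$ and use H\"older on each dyadic shell for $\lvert\nabla u\rvert^q$. The paper is slightly more explicit about working on $(\eps,T)$ and letting $\eps\to 0$ at the end (since integrability of $u$ and $\nabla u$ is only known for strictly positive times), but you allude to this when invoking $L^p(\eps,T;W^{1,p}_0(U))$.
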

\begin{proof}
  Since $T$ and $\nu$ are given, we can choose open $U\Subset \Rn$
  such that $u(\cdot,t)$ is compactly supported in $U$ for each
  $t\in[0,T]$, see Remark \ref{rem:cptsupport}. Let us define the sets
  \begin{displaymath}
    E_j^\eps=\{(x,t)\in U_{\eps,T}: j\leq u(x,t)<2j\}.
  \end{displaymath}
  The reason for introducing a positive $\eps>0$ here is that we only
  know the integrability of the solution $u$ and its gradient for
  strictly positive times. The aim is to derive estimates independent
  of $\eps$ on the time interval $(\eps,T)$, and then pass to the
  limit $\eps\to 0$. This is possible since the right hand side in the
  estimate of Lemma~\ref{lem:gradient-est} is independent of $\eps$.

  Recall the notation
  $u_{2j}=\min(u,2j)$. We have
  \begin{align*}
    j^{\kappa p}\abs{E_j^\eps}\leq & \int_{E_j^\eps}u^{\kappa p}_{2j}\ud x\ud t\\
    \leq & \int_{U_{\eps,T}}u^{\kappa p}_{2j}\ud x\ud t\\
    \leq & c\int_{U_{\eps,T}}\abs{\nabla u_{2j}}^p\ud x\ud
    t\left(\esssup_{\eps<t<T}\int_U u_{2j}^2 \ud x\right)^{p/n}\\
    \leq & c\nu(\R^n)j^{1+p/n}
  \end{align*}
  by applications of the parabolic Sobolev inequality and Lemma
  \ref{lem:gradient-est}; see also Remark \ref{rem:gradient-est}. It
  follows that
  \begin{equation}\label{eq:decay}
    \abs{E_j^\eps}\leq cj^{1-p-p/n}
  \end{equation}
  with a constant independent of $\eps$.

  With \eqref{eq:decay}, the estimates follow by essentially the same
  computations as in the proof of Lemma 3.14 in \cite{kinnunenl05}. By
  \eqref{eq:decay}, we get
  \[
  \begin{split}
    \int_\eps^T\int_{\Rn} u^q \ud x \ud t&\le \int_\eps^T\int_{\{u<1\}} u^{q-1} u  \ud x \ud t+\sum_{j=1}^\infty \int_{E_{2^{j-1}}^\eps} 
    u^q \ud x \ud t\\
    &\le \nu(\Rn)T+\sum_{j=1}^\infty 2^{jq}\abs{E_{2^{j-1}}^\eps}\\
    & \le \nu(\Rn)T+2^{p+p/n-1}\sum_{j=1}^\infty  2^{j(q+1-p-p/n)}<\infty.
  \end{split}
\]
The sum on the last line is finite by the choice of $q$. We let
$\eps\to 0$ to conclude the desired estimate for $u$.

Next we estimate the gradient. We use Hölder's inequality,
\eqref{eq:decay} and Lemma \ref{lem:gradient-est} to obtain
  \[
\begin{split}
\int_\eps^T &\int_U \abs{\nabla u}^q \ud x\ud t\\
&=\int_{\{u<1\}} \abs{\nabla u_1}^q\ud x \ud t  +\sum_{j=1}^\infty\int_{E_{2^{j-1}}^\eps} \abs{\nabla u}^q \ud x \ud t\\
&\le \nu(\Rn)^{q/p}(T\abs{U})^{1-q/p}+\sum_{j=1}^\infty\Big(\int_{E_{2^{j-1}}^\eps} \abs{\nabla u}^p \ud x \ud t\Big)^{q/p}\abs{E_{2^{j-1}}^\eps}^{1-q/p}\\
&\le \nu(\Rn)^{q/p}(T\abs{U})^{1-q/p}\\
&\hspace{1 em}+\sum_{j=1}^\infty 2^{(j-1)(1-q/p)(1-p-p/n)}\Big(\int_{E_{2^{j-1}}^\eps} \abs{\nabla u_{2^{j-1}}}^p \ud x \ud t\Big)^{q/p}.
\end{split}
\]
We apply Lemma \ref{lem:gradient-est} once more to estimate the last
integral by $C 2^{j-1}$. This gives the bound 
\[
\begin{split}
C \sum_{j=1}^\infty 2^{(j-1)(1-q/p)(1-p-p/n)}2^{(j-1)q/p}=C\sum_{j=1}^\infty 2^{(j-1)(1-p-p/n+q+q/n)}
\end{split}
\] 
for the second term. Here the sum converges since
$1-p-p/n+q+q/n<0$. Letting $\eps\to 0$ completes the proof.
\end{proof}

\section{Stability}

\label{sec:stability}

Our stability result now follows from the estimates of the previous
section by arguments similar to those in \cite{kinnunenp10}.

We work with a sequence of exponents $(p_i)$ such that 
\begin{displaymath}
  \lim_{i\to \infty}p_i=p>2.
\end{displaymath}
By the convergence, we are free to assume that $p_i>2$, and that all
the exponents belong to a compact subinterval $[p^-,p^+] $of
$(2,\infty)$. 

\begin{theorem}
  \label{thm:weak-conv}
  Let $(p_i)$ be a sequence such that $p_i\to p>2$, $(u_i)$ the
  corresponding solutions to \eqref{eq:CP-trace} (see Remark
  \ref{rem:cpsol}), $T>0$ and $U\Subset \Rn$.Then there exists a
  subsequence still denoted by $(u_i)$ and a function $u\in
  L^{q}(0,T;W^{1,q}(U))$, $q<p-1+1/(n+1)$, such that
  \[
  u_i\to u \quad\trm{in}\quad
  L^{q}({U_T})
  \]
  and
  \[
  \nabla u_i\to \nabla u \quad\trm{weakly
    in}\quad L^{q}({U_T}),
  \] as $i\to\infty$.
\end{theorem}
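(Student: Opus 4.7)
Plan. The strategy is to combine the uniform a priori bounds of Theorem~\ref{thm:gradient-est} with a compactness argument of Aubin--Lions type, using the finite propagation result of Section~\ref{sec:prop} to force the whole sequence into a single ambient domain.

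First, since the constant in Theorem~\ref{thm:general-propagation} depends on $p_i$ only through the compact interval $[p^-,p^+]$ and on $\nu(\R^n)$, Remark~\ref{rem:cptsupport} lets me enlarge $U$ once and for all so that $u_i(\cdot,t)$ is supported in $U$ for every $t\in[0,T]$ and every $i$. Next I pick an auxiliary exponent $q'$ with $\max(q,p-1)<q'<p-1+1/(n+1)$; for $i$ large enough $q'<p_i-1+1/(n+1)$, so Theorem~\ref{thm:gradient-est} gives
\[
\norm{u_i}_{L^{q'}(U_T)}+\norm{\nabla u_i}_{L^{q'}(U_T)}\le C,
\]
with $C$ independent of $i$ (the theorem explicitly records that the constant stays bounded as $p_i\to p$). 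By reflexivity of $L^{q'}(U_T)$ I extract a subsequence with $u_i\rightharpoonup u$ and $\nabla u_i\rightharpoonup V$ weakly in $L^{q'}$, and distributional convergence forces $V=\nabla u$. This already yields the weak $L^q$ convergence of the gradients claimed in the theorem, and the case $q\le p-1$ will follow by H\"older on the bounded set $U_T$.

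To upgrade $u_i\to u$ to strong convergence in $L^q(U_T)$ I would apply an Aubin--Lions type lemma. Spatial compactness is Rellich--Kondrachov, $W^{1,q'}_0(U)\hookrightarrow\hookrightarrow L^q(U)$. For time control I use the equation itself: because $q'>p_i-1$ for $i$ large, $s:=q'/(p_i-1)$ stays uniformly bounded below by some $s_0>1$, and $\abs{\nabla u_i}^{p_i-2}\nabla u_i$ is then uniformly bounded in $L^{s}(U_T)$. Consequently
\[
\partial_t u_i=\divt\bigl(\abs{\nabla u_i}^{p_i-2}\nabla u_i\bigr)
\]
is uniformly bounded in $L^{s_0}(0,T;W^{-1,s_0}(U))$. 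Aubin--Lions then supplies a further subsequence converging strongly in $L^q(U_T)$.

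The hard part, flagged already in the introduction, is the mismatch of ambient spaces: the natural energy space $L^{p_i}(0,T;W^{1,p_i}_{\text{loc}}(\R^n))$ moves with $i$, whereas any compactness argument demands a fixed pair of spaces. The truncation-based estimates of Theorem~\ref{thm:gradient-est} neutralise this by producing $i$-independent bounds in $L^{q'}(U_T)$, and the local Reverse H\"older inequality from \cite{kinnunenl00} is what one needs to self-improve integrability of $\nabla u_i$ locally so that the $L^{q'}$ bound, the control of $\partial_t u_i$ through the equation, and the Aubin--Lions embedding chain all fit together in a single space independent of $i$. Once this setup is arranged, the remainder of the argument is standard functional analysis.
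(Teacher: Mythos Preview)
Your proposal is correct and follows essentially the same route as the paper: uniform $L^{q}$-bounds from Theorem~\ref{thm:gradient-est} give weak compactness by reflexivity, and strong convergence of the $u_i$ comes from an Aubin--Lions/Simon argument after bounding $\partial_t u_i$ in a negative Sobolev space via the equation. One small correction: the Reverse H\"older result from \cite{kinnunenl00} is not needed for this theorem at all---Theorem~\ref{thm:gradient-est} already delivers the uniform $L^{q'}$-bounds directly, and the paper's own proof of Theorem~\ref{thm:weak-conv} does not invoke higher integrability; that ingredient enters only later, in the proof of Theorem~\ref{thm-main}, to justify testing with $u_j-u_k$ when $p_j\neq p_k$.
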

\begin{proof}
  Since $q>1$, weak convergence follows from the reflexivity of $L^q$
  and the uniform $L^q$-bounds of Theorem \ref{thm:gradient-est}. For the norm convergence of the solutions,
  we estimate the time derivative of $u$ and appeal to the parabolic version of Rellich's theorem by Simon 
  \cite{simon87}, see also
 page 106 of Showalter's monograph \cite{showalter97}. Denote $1/q'+1/q=1$.
  For all $\varphi\in C_0^\infty(U_T)$, we have
\[
\begin{split}
    \abs{\inprod{\partial_t u_i,\varphi}}&=
    \abs{\int_{U_T}\abs{\nabla u_i}^{p-2}\nabla u_i\cdot \nabla \varphi\ud x \ud t}
    \\
    &\leq C \Big(\int_{U_T}\abs{\nabla u_i}^{(p-1)q'} \ud x \ud t\Big)^{1/q'}\norm{\nabla \varphi}_{L^{q}(U_T)}
\end{split}
\]
  for any $\phi\in C^{\infty}_0(U_T)\cap
L^q(0,T;W_0^{1,q}(U))$. Since $(p-1)q'<q$, it follows that $\Big(\int_{U_T}\abs{\nabla u_i}^{(p-1)q'} \ud x \ud t\Big)^{1/q'}<C$. Thus, by the density of smooth functions in $L^q(0,T;W_0^{1,q}(U))$,
the time derivative of $u_i$ is bounded in the dual space of
$L^q(t_1,t_2;W^{1,q}_0(\Omega'))$.
  Now the theorem mentioned above implies the claim.
\end{proof}

The previous theorem is not sufficient for our stability result, since
the weak convergence of the gradients is not enough to identify the
weak limit of the nonlinear quantity $\abs{\nabla u_i}^{p_i-2}\nabla
u_i$. This is rectified in the following theorem by proving the
pointwise convergence of the gradients.

\begin{theorem}
\label{thm-main}
Let $(p_i)$, $(u_i)$, $q$, $U$  be as in Theorem~\ref{thm:weak-conv}.
There exists a
subsequence $(u_i)$ and a function $u\in
L^{q}(0,T;W^{1,q}(U))$ such that
\[
\begin{split}
u_i \to u \quad\trm{in}\quad
L^{q}(0,T;W^{1,q}(U)),
\end{split}
\] as $i\to\infty$.
\end{theorem}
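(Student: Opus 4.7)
The plan is to upgrade the weak convergence of $\nabla u_i$ from Theorem \ref{thm:weak-conv} to strong $L^q$-convergence. The mechanism is the strict monotonicity of the $p$-Laplace vector field, which will yield a.e.\ convergence of the gradients, combined with a uniform higher integrability coming from the local reverse Hölder inequality of \cite{kinnunenl00} and Vitali's theorem.

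First I would fix a cutoff $\eta\in C_0^\infty(U_T)$ with $0\le\eta\le 1$, and work towards $\nabla u_i\to\nabla u$ in $L^q$ on $\{\eta=1\}$; the full statement follows by exhausting $U_T$ with such sets along a diagonal subsequence. The central quantity is
\[
I_i:=\int_{U_T}\eta\,\bigl(|\nabla u_i|^{p_i-2}\nabla u_i-|\nabla u|^{p_i-2}\nabla u\bigr)\cdot(\nabla u_i-\nabla u)\,dx\,dt,
\]
which by the standard monotonicity inequality for $p\ge 2$ dominates $c\int_{U_T}\eta\,|\nabla u_i-\nabla u|^{p_i}\,dx\,dt$.

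To show $I_i\to 0$, I would test the regularized equation \eqref{eq:reg} for $u_i$ with $\eta(u_i-u)$, after applying the Naumann convolution of Lemma \ref{lem:naumann} in time, and then pass $\sigma\to 0$. Expanding $\nabla(\eta(u_i-u))$ and subtracting the integral $\int \eta\,|\nabla u|^{p_i-2}\nabla u\cdot\nabla(u_i-u)$, which tends to zero by pairing weak $L^q$-convergence of $\nabla u_i$ against the strong $L^{q'}$-convergence $|\nabla u|^{p_i-2}\nabla u\to|\nabla u|^{p-2}\nabla u$, I reduce $I_i$ to two remainders: a spatial cutoff term $\int |\nabla u_i|^{p_i-2}\nabla u_i\cdot\nabla\eta\,(u_i-u)$, killed by Hölder together with the strong $L^q$-convergence $u_i\to u$ and the uniform $L^{q'}$-bound on $|\nabla u_i|^{p_i-1}$ furnished by Theorem \ref{thm:gradient-est}; and a time-derivative term involving $\partial_t(u_i)_\sigma\cdot\eta(u_i-u)$, again controlled by the strong convergence of $u_i$. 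Hence $I_i\to 0$, and by monotonicity a further subsequence satisfies $\nabla u_i\to\nabla u$ a.e.\ on $\{\eta>0\}$.

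To convert a.e.\ convergence into $L^q$-convergence I would invoke the local reverse Hölder inequality of \cite{kinnunenl00}, whose constants are uniform as long as $p_i$ stays in the compact interval $[p^-,p^+]$. This yields a uniform local bound on $\nabla u_i$ in $L^{q+\delta}$ for some $\delta>0$, hence equi-integrability of $|\nabla u_i-\nabla u|^q$, and Vitali's theorem closes the argument on $\{\eta=1\}$. The main obstacle I anticipate is Step 3, i.e.\ controlling the auxiliary integrand $|\nabla u|^{p_i-2}\nabla u$ with its $i$-dependent exponent: showing it converges in $L^{q'}$ to $|\nabla u|^{p-2}\nabla u$ requires exactly the local higher integrability of $\nabla u$ (inherited from $\nabla u_i$ by weak lower semicontinuity of the reverse-Hölder bound), so the reverse Hölder estimate is indispensable both for passing to the limit in the energy identity and for the final Vitali step.
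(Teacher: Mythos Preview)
Your overall strategy---monotonicity of the $p$-Laplace vector field to obtain a.e.\ convergence of the gradients, then uniform higher integrability to upgrade to $L^q$---matches the paper's in spirit, and your endgame (Vitali versus the paper's convergence-in-measure plus H\"older with a slightly larger exponent $\tilde q$) is essentially equivalent. The genuine divergence is in \emph{how} you produce the smallness of the monotonicity quantity, and this is where there is a gap.

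The paper does \emph{not} compare $u_i$ with the limit $u$. Instead it runs a Cauchy-sequence argument: it subtracts the mollified equations for two solutions $u_j,u_k$ in the sequence and tests with $\phi=u_j-u_k$. The crucial advantage is that the parabolic term then becomes
\[
\int \partial_t(u_j-u_k)_\sigma\,(u_j-u_k)\;\geq\;\tfrac12\int_U (u_j-u_k)_\sigma^2(\cdot,T)-\tfrac12\int_U(u_j-u_k)_\sigma^2(\cdot,\eps),
\]
a genuine ``derivative of a square'' that survives the limit $\sigma\to0$ with the right sign. The varying exponents are then handled by the elementary inequality for $\bigl||\zeta|^a-|\zeta|^b\bigr|$, splitting the elliptic term into a fixed-$p$ monotone part plus two error terms that vanish as $p_j,p_k\to p$.

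In your approach you test only the equation for $u_i$ with $\eta(u_i-u)$. At this stage $u$ is \emph{not yet known to be a solution}, so there is no second equation to subtract, and the time term $\int\partial_t(u_i)_\sigma\,\eta(u_i-u)$ cannot be reduced to a perfect square. Your claim that it is ``controlled by the strong convergence of $u_i$'' is not justified: the distributional time derivative $\partial_t u_i$ lives only in the dual of $L^q(0,T;W^{1,q}_0)$, so pairing it against $\eta(u_i-u)$ and sending the result to zero would require $\eta(u_i-u)\to0$ strongly in $L^q(0,T;W^{1,q}_0)$---i.e.\ strong convergence of the gradients, which is precisely what you are trying to prove. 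The $L^q$ convergence of $u_i$ alone does not control this pairing. To salvage the argument you would need an additional device such as a Landes-type time regularization of $u$ (so that $\partial_t u_\nu$ exists and one can integrate by parts), together with a careful passage $\nu\to0$; this is substantially more work than you indicate. The paper's Cauchy-sequence route avoids the issue entirely, since both $u_j$ and $u_k$ satisfy equations and the parabolic term has the correct structure from the start.
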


\begin{proof}
  By Remark \ref{rem:cptsupport}, we can again assume that $\spt
  u(\cdot,t)\subset U$.  We can focus our attention to the convergence
  of the gradients in $L^{q}(U_T)$.  To establish this, let $u_j$ and
  $u_k$ be two solutions in the sequence. Since both $u_j$ and $u_k$
  satisfy the mollified equation \eqref{weak_solutionb}, by
  subtracting, we obtain
\begin{equation} \label{summa}
\begin{split}
\int_{{U_{\eps,T}}}& \parts{}{t}(u_j-u_k)_{\sigma} \phi \ud x \ud t\\
& \hspace{1 em}+\int_{U_{\eps,T}} (\abs{\nabla u_j}^{p_j-2}\nabla
u_j-\abs{\nabla u_k}^{p_k-2}\nabla u_k)_{\sigma} \cdot \nabla \phi
\ud x \ud t\\
&=\int_{U} (u_j(x,\eps)-u_k(x,\eps))\frac1\sigma \int_{\eps}^T e^{-(s-\eps)/\sigma}\phi(x,s) \ud s \ud x. 
\end{split}
\end{equation}
We use  the test function
\[
\phi(x,t)=u_j(x,t)-u_k(x,t).
\]
 Also observe that $p_j,p_k$ are close enough to $p$, both the functions actually belong to a higher parabolic Sobolev space with power $p+\delta$ with some $\delta>0$ by the higher integrability result in \cite{kinnunenl00}.


We estimate the first term on the left hand side of \eqref{summa}. A
substitution of the test function and the properties of the
convolution imply
\[
\begin{split}
\int_{{U_{\eps,T}}}& \parts{}{t}(u_j-u_k)_{\sigma} \phi \ud x \ud t\\
&=
\int_{{U_{\eps,T}}} \parts{}{t}(u_j-u_k)_{\sigma} (u_j-u_k) \ud x \ud t\\
&= 
\int_{{U_{\eps,T}}} \Big((u_j-u_k)-(u_j-u_k)_\sigma\Big)^2 \ud x \ud t \\  
&\hspace{1 em}+\int_{{U_{\eps,T}}}\parts{}{t}(u_j-u_k)_{\sigma} (u_j-u_k)_\sigma\ud x \ud t\\
&\ge   
\frac12 \int_{{U}}(u_j-u_k)^2_{\sigma}(x,T) \ud x-\frac12\int_{{U}}(u_j-u_k)^2_{\sigma}(x,\eps) \ud x.
\end{split}
\]
This estimate is free of the time derivatives of the functions
$u_j$ and $u_k$, which is essential for us in the passage to the limit with
$\sigma$. Now, letting $\sigma\to 0$, we conclude that
\[
\begin{split}
\int_{{U_{\eps,T}}}& (\abs{\nabla u_j}^{p_j-2}\nabla
u_j-\abs{\nabla u_k}^{p_k-2}\nabla u_k) \cdot
(\nabla u_j-\nabla u_k) \ud x \ud t\\
\leq & -\int_{U} (u_j-u_k)^2(x,T)  \ud x
 +\int_{U} (u_j-u_k)^2(x,\eps)  \ud x.
 \end{split}
\]
Observe that the first term on the right hand side is
nonpositive. Thus
\begin{equation}
\label{eq-smaller-than-zero}
\begin{split}
 \int_{{U_{\eps,T}}}& (\abs{\nabla
u_j}^{p_j-2}\nabla u_j-\abs{\nabla u_k}^{p_k-2}\nabla u_k) \cdot
(\nabla u_j-\nabla u_k) \eta \ud x \ud t\\
&\le \int_{U} (u_j-u_k)^2(x,\eps) \eta(x) \ud x.
\end{split}
\end{equation}
We divide the left hand side in three parts as
\beq
\label{eq-three-parts}
\begin{split}
\int_{{U_{\eps,T}}}& (\abs{\nabla u_j}^{p_j-2}\nabla
u_j-\abs{\nabla u_k}^{p_k-2}\nabla u_k) \cdot
(\nabla u_j-\nabla u_k)\ud x \ud t \\
&= \int_{{U_{\eps,T}}} (\abs{\nabla u_j}^{p-2}\nabla
u_j-\abs{\nabla u_k}^{p-2}\nabla u_k) \cdot
(\nabla u_j-\nabla u_k)\ud x \ud t \\
&\hspace{1 em}+ \int_{{U_{\eps,T}}} (\abs{\nabla
u_j}^{p_j-2}-\abs{\nabla
  u_j}^{p-2})\nabla
u_j \cdot
(\nabla u_j-\nabla u_k) \ud x \ud t\\
&\hspace{1 em}+\int_{{U_{\eps,T}}} (\abs{\nabla
u_k}^{p-2}-\abs{\nabla u_k}^{p_k-2})\nabla u_k \cdot
(\nabla u_j-\nabla u_k) \ud x \ud t\\
&=I_1+I_2+I_3.
\end{split}
\eeq
First, we concentrate on $I_2$ and $I_3$.
A straightforward calculation shows that
\[
\begin{split}
|\abs{\zeta}^a-\abs{\zeta}^b|
&=\abs{\exp(a\log\abs{\zeta})
  -\exp(b\log(\abs{\zeta})}\\
&\leq \max_{s\in [a,b]} \abs{\parts{\,\exp(s
  \log\abs{\zeta})}{s}} \abs{a-b}\\
  & \leq \abs{\log\abs{\zeta}} ( \abs{\zeta}^a+
\abs{\zeta}^b)\abs{a-b},
\end{split}
\]
where $\zeta\in\Rn$ and $a,\,b\ge0$.
If $\abs{\zeta}\geq 1$, then
\[
\abs{\log\abs{\zeta}} ( \abs{\zeta}^{a}+
\abs{\zeta}^b)
\leq \frac1\gamma\abs{\zeta}^{\max(a,b)+\gamma},
\]
and if $\abs{\zeta}\leq 1$, then
\[
 \abs{\log\abs{\zeta}} ( \abs{\zeta}^{a}+
\abs{\zeta}^b) \leq \frac 1e\left(\frac1a+\frac1b\right).
\]
This leads to
\begin{equation}
\label{eq:any_z}
||\zeta|^a-|\zeta|^b|
\leq\left(\frac1\gamma\abs{\zeta}^{\max(a,b)+\gamma}+\frac 1e\left(\frac1a+\frac1b\right)
\right)\abs{a-b}
\end{equation}
for every $\zeta\in\Rn$ and $a,\,\ge0$.

Next we apply \eqref{eq:any_z} with $\zeta=\nabla u_j$, $a=p_j-2$,
and $b=p-2$. This implies
\[
\begin{split}
|I_2|&\leq c \abs{p_j-p} \int_{{U_{\eps,T}}} (1+\abs{\nabla
  u_j}^{\max(p_j-2,p-2)+\gamma})\abs{ \nabla u_j}
\abs{\nabla u_j-\nabla u_k} \ud x \ud t.
\end{split}
\]
The integral on the right hand side is uniformly bounded for small enough $\gamma>0$ by Theorem \ref{thm:gradient-est}. Consequently, $I_2\to 0$, as $j,k\to \infty$. A similar reasoning
implies that $I_3$ tends to zero as $j,k\to\infty$. From the
elementary inequality
 \[
2^{2-p}\abs{a-b}^p\leq (\abs{a}^{p-2} a-\abs{b}^{p-2} b)\cdot
(a-b),
\]
 \eqref{eq-smaller-than-zero} as well as
\eqref{eq-three-parts} we conclude that
\[
\begin{split}
 \int_{{U_{\eps,T}}} \abs{\nabla u_j-\nabla u_k}^p \ud x \ud t&\leq
 c(\abs{I_2}+\abs{I_3}) +\int_{U} (u_j-u_k)^2(x,\eps) \eta(x) \ud x.
 \end{split}
\]
The right hand side can be made arbitrary small by choosing $j$ and
$k$ large enough, and a suitable $\eps$ since the last integral
converges for almost every $\eps$ (the choice of $\eps$ can be done
independent of $j,k$).  This shows that $(\nabla u_i)$ is a Cauchy
sequence in $L^p(U_{\eps,T})$, and thus it converges. We can choose a
subsequence such that $\nabla u_i\to\nabla u$ a.e.\ in $U_{\eps,T}$ as
$i\to\infty$, and further by diagonalizing with respect to $\eps$ we
can pass to subsequence such that $\nabla u_i\to\nabla u$ a.e.\ in
$U_T$ as $i\to\infty$. This and the uniform estimate in Theorem
\ref{thm:gradient-est} implies that 
\[
\begin{split}
\nabla u_i\to\nabla u \qquad \text{in} \quad L^q(U_T) 
\end{split}
\]  
as $i\to\infty$, for $q<p-1+1/(n+1)$. To see this, pick $\tilde{q}$
such that $q<\tilde{q}<p-1+1/(n+1)$. By the pointwise convergence of
the gradients established above, we also have convergence in measure. 
For any $\lambda>0$, we have
\begin{align*}
  \int_{U_T}&\abs{\nabla(u-u_j)}^q\,dz \\
  \leq & c\int_{\{\abs{\nabla(u-u_j)}<
    \lambda\}}\abs{\nabla(u-u_j)}^q\,dz+
  c\int_{\{\abs{\nabla(u-u_j)}\geq
    \lambda\}}\abs{\nabla(u-u_j)}^q\,dz\\
  \leq & c\abs{U_T}\lambda^q+c\abs{\{\abs{\nabla(u-u_j)}\geq
    \lambda\}}^{1-q/\tilde{q}}
  \left(\int_{U_T}\abs{\nabla(u-u_j)}^{\tilde{q}}\,dz\right)^{q/\tilde{q}}.
\end{align*}
by H\"older's inequality. By the convergence in measure and the bound
in $L^{\tilde{q}}$, we get that
\begin{displaymath}
  \limsup_{j\to \infty}\norm{\nabla(u-u_j)}_{L^q(U_T)}\leq c\lambda.
\end{displaymath}
Since $\lambda$ was arbitrary, the claim follows from this.
\end{proof}

\begin{theorem}
  Let $(p_i)$, $(u_i)$, $q$, $U$, and $u$ be as in
  Theorem~\ref{thm:weak-conv}.  Then there is a subsequence
  $(u_{i})$and a function $u$ such that
  \begin{displaymath}
    u_{i}\to u \quad \text{in }L^q(0,T;W^{1,q}(U)).
  \end{displaymath}
  The function $u$ is a solution to the Cauchy problem
  \begin{displaymath}
    \begin{cases}
      \pdo{u}{t}-\mathrm{div}(\abs{\nabla u}^{p-2}\nabla u)=0,& \text{in
      }\R^n\times (0,T),\\
      u(\cdot,0)=\nu. &
    \end{cases}
  \end{displaymath}
  
  If the initial trace $\nu$ is such that the solution to the limiting
  Cauchy problem is unique, then the whole sequence converges to the
  unique solution.
\end{theorem}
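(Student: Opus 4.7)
The plan is to combine the strong convergence from Theorem~\ref{thm-main} with a passage to the limit in the weak formulation, and then to identify the initial trace of the limit via an equicontinuity argument in time. Theorem~\ref{thm-main} produces a subsequence (not relabeled) and $u\in L^q(0,T;W^{1,q}(U))$ with $u_i\to u$ and $\nabla u_i\to\nabla u$ strongly in $L^q(U_T)$; by diagonalizing I may further assume a.e.\ convergence on $U_T$. Remark~\ref{rem:cptsupport} lets me view these convergences on $\R^n\times(0,T)$ after extending $u$ by zero outside $U$.

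To verify that $u$ is a weak solution of the $p$-parabolic equation on $\R^n\times(0,T)$, I would pass to the limit in the weak formulation for $u_i$ tested against any $\phi\in C_0^\infty(\R^n\times(0,T))$. The linear term converges directly from $L^q$ convergence of $u_i$. For the nonlinear term, a.e.\ convergence of $\nabla u_i$ combined with continuity of $(\xi,a)\mapsto\abs{\xi}^{a-2}\xi$ at $a=p$ yields
\[
\abs{\nabla u_i}^{p_i-2}\nabla u_i\to\abs{\nabla u}^{p-2}\nabla u\quad\text{a.e. in }U_T.
\]
Theorem~\ref{thm:gradient-est} supplies a uniform bound on $\nabla u_i$ in $L^{\tilde q}(U_T)$ for some $\tilde q>p-1$; since $p_i\to p$, the family $\abs{\nabla u_i}^{p_i-1}$ is then uniformly bounded in $L^{r}(U_T)$ for some $r>1$, and Vitali's convergence theorem delivers $L^1$ convergence on the support of $\phi$, sufficient for the passage to the limit.

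For the initial condition, fix $\phi\in C_0^\infty(\R^n)$ and set $f_i(t)=\int_{\R^n} u_i(x,t)\phi(x)\,dx$. The weak formulation shows that $f_i$ is absolutely continuous on $(0,T)$ with $|f_i'(t)|\le\norm{\nabla\phi}_\infty\int_U\abs{\nabla u_i}^{p_i-1}\,dx$; since $u_i$ has initial trace $\nu$, integrating from $0$ gives
\[
\left|f_i(t)-\int\phi\,d\nu\right|\le\norm{\nabla\phi}_\infty\int_0^t\!\int_U\abs{\nabla u_i}^{p_i-1}\,dx\,ds=:\omega(t),
\]
where $\omega(t)\to 0$ as $t\to 0$ uniformly in $i$, by the uniform $L^r$-bound with $r>1$ together with absolute continuity of the integral. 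For a.e.\ $t\in(0,T)$, $u_i(\cdot,t)\to u(\cdot,t)$ in $L^q(U)$ along a subsequence, so $f_i(t)\to f(t):=\int u(x,t)\phi\,dx$; thus $|f(t)-\int\phi\,d\nu|\le\omega(t)$ for a.e.\ $t$. Since $u$ is a weak solution (by the previous step) and hence lies in $C((0,T);L^2_{loc})$, $f$ is continuous on $(0,T)$, and the bound extends to every $t\in(0,T)$. Letting $t\to 0$ identifies the initial trace of $u$ as $\nu$.

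The main obstacle is the nonlinear passage to the limit in the second step: accommodating $p_i\to p$ simultaneously with the convergence of gradients requires the integrability exponent $p-1+1/(n+1)$ from Theorem~\ref{thm:gradient-est} to strictly exceed $p-1$, which is exactly what it does. The final uniqueness claim is then a routine subsequence argument: every subsequence of $(u_i)$ admits, by the above, a further subsequence converging to \emph{some} solution of the limit Cauchy problem; if that solution is unique, the whole sequence must converge to it.
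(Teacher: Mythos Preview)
Your proof is correct and follows essentially the same approach as the paper: Theorem~\ref{thm-main} for the strong convergence, pointwise convergence of the fluxes together with a uniform $L^r$ bound (you use Vitali where the paper uses weak convergence plus identification of the limit) to pass to the limit in the weak formulation, and the same key estimate $\int_0^t\!\int_U|\nabla u_i|^{p_i-1}\,dx\,ds\le C t^{(q-p+1)/q}$ to handle the initial trace. Your presentation of the initial-trace step via the equicontinuity of $t\mapsto\int u_i(x,t)\phi\,dx$ is a bit more streamlined than the paper's explicit test function $\varphi(x)\chi^{h,k}_{t_1,t_2}(t)$ with limits taken in a prescribed order, but the underlying argument is the same.
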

\begin{remark}\label{rem:uniqueness}
  The uniqueness of solutions to the Cauchy problem is open for
  general initial measures $\nu$. However, uniqueness holds if $\nu$
  is an $L^1$ function, see \cite{dibenedettoh89}. Further, uniqueness
  is known for the special case of Dirac's delta, and then the unique
  solution is the Barenblatt solution, see \cite{kaminvaz88}. Hence
  \eqref{eq:barenblatt-conv} follows from this theorem.
\end{remark}

\begin{proof}
  As the first step, we show that $u$ is a weak solution in
  $\R^n\times (0,T)$. Pick a test function $\varphi\in
  C_0^\infty(\R^n\times (0,T))$, and choose the space-time cylinder
  $U_T$ so that the support of $\varphi$ is contained in $U_T$.  The
  sequence $(\abs{\nabla u_i}^{p_i-2}\nabla u_i)$ is bounded in
  $L^r(U_T)$ for some $r>1$, and thus converges weakly in $L^r(U_T)$,
  up to a subsequence, to some limit. By the pointwise convergence of
  the gradients established in Theorem~\ref{thm-main} and the fact
  that $p_i\to p$, the weak limit must be $\abs{\nabla u}^{p-2}\nabla
  u$. This follows from an estimate similar to \eqref{eq:any_z} in
  the proof of Theorem~\ref{thm-main}. Hence we have
  \begin{align*}
    \int_{U_T}&-u\frac{\partial\varphi}{\partial t}+\abs{\nabla
      u}^{p-2}\nabla u\cdot \nabla \varphi\,dx\,dt\\
    &=\lim_{i\to \infty}\left(\int_{U_T}-u_i\frac{\partial\varphi}{\partial t}+\abs{\nabla
      u_i}^{p_i-2}\nabla u_i\cdot \nabla \varphi\,dx\,dt\right)=0.
  \end{align*}
  Since $\varphi$ was arbitrary, $u$ is a weak solution.

  Next we show that $u$ takes the right initial values in the sense of
  distributions. Define a linear approximation of the characteristic
  function of the interval $[t_1,t_2]$ as
\[
\chi_{t_1,t_2}^{h,k}(t)=
\begin{cases}
  0,& t\le t_1-h\\
  (t+h-t_1)/h,& t_1-h<t<t_1\\
  1,& t_1<t<t_2\\
  (t_2+k-t)/k,& t_2<t<t_2+k\\
 0, &t\geq t_2+k,
\end{cases}
\]
where $0\le t_1-h$.
Let $\vp \in
C^{\infty}_0(\Rn)$ and choose $\vp(x) \chi_{t_1,t_2}^{h,k}(t)$ as a test
function in the weak formulation. We obtain
\beq
\label{eq-before-limits}
\begin{split}
&\abs{\aveint{t_2}{t_2+k}\int_{\Rn } u_i \vp \, d x \,dt -
  \aveint{t_1-h}{t_1}\int_{\Rn}
u_i \vp \, d x \,d t}  \\
&\hspace{5 em} \le \abs{\int_{t_1-h}^{t_2+k}\int_{\Rn}
  {\abs{\nabla u_i}^{p-2}\nabla u_i \chi_{t_1,t_2}^{h,k}\cdot  \nabla\vp \   d x\,dt}}.
\end{split}
\eeq
Next we pass to limits in a particular order. 
Since
$t \mapsto u_i(\cdot,t)$ is a continuous function
having values in $L^2$,
$u_i \in C((0,T), L^2_{\trm{loc}}(\Rn))$, see for example page 106 in \cite{showalter97} along with similar estimates as in the proof of Theorem \ref{thm:weak-conv},  it follows that
\[
\aveint{t_1-h}{t_1}\int_{\Rn} u_i \vp \, d x \,d
t\to \int_{\Rn} u_i(x,t_1) \vp(x) \, d x (x),
\]
as $h\to 0$. Furthermore, the initial condition implies
\[
\int_{\Rn} u_i(x,t_1) \vp(x) \, d  x (x) \to \int_{\Rn}  \vp(x)
\, d \nu (x),
\]
as $t_1\to 0$. As then $i\to \infty$, we obtain
\[
\aveint{t_2}{t_2+k}\int_{\Rn } u_i \vp \, d x 
\,dt \to \aveint{t_2}{t_2+k}\int_{\Rn } u \vp \,
d x \,dt
\]
due to Theorem \ref{thm:weak-conv}. Finally,
by passing to zero with $k$, it follows that
\[
\aveint{t_2}{t_2+k}\int_{\Rn } u  \vp \, d x \,dt
\to \int_{\Rn} u(x,t_2) \vp(x) \, d x (x),
\]
since the weak solution $u$ belongs to
$C((0,T),L^2_{\trm{loc}}(\Rn))$. Observe that we only use the
continuity on an open interval $(0,T)$.

Consider next the right hand side of \eqref{eq-before-limits}. Let
first $h\to 0$ and $t_1\to 0$ in this order. The uniform integrability estimate in Theorem \ref{lem:gradient-est} implies
\[
\begin{split} 
\Big|\int_{0}^{t_2+k}\int_{\Rn} &\abs{\nabla u_i}^{p-2}\nabla u_i \cdot(\chi_{0,t_2}^{0,k}\nabla \vp) \, d x \,dt\Big|\\
& \leq 
\int_{0}^{t_2+k}\int_{\spt \vp} \abs{\nabla u_i}^{p-1} \,d x \,dt\\
& \leq C (\abs{t_2+k}\abs{U})^{(q-p+1)/q} \Big(\int_{0}^{t_2+k}\int_{\spt \vp} \abs{\nabla u_i}^{q} \,d x \,dt\Big)^{(p-1)/q}.
\end{split}
\]
Then we pass to limits
with $i$ and $k$, merge the estimates, and obtain
\beq
\label{grad2}
 \abs{\int_{\Rn} u(x,t_2) \vp(x) \, d x (x)-\int_{\Rn}
\vp(x) \,d \nu(x)}\leq C t_2^{(q-p+1)/q} .
\eeq
 The right hand
side tends to zero, as $t_2\to 0$, and we have shown
that $u$ takes the right initial values. 

It remains to prove the claim about the situation when we have
uniqueness. We argue by contradiction. Let $u$ be the unique solution
to the limit problem. If the whole sequence does not converge to $u$,
there are indices $i_k$, $k=1,2,3,\ldots$, and a number $\eps>0$ such
that
\begin{displaymath}
  \lim_{k\to \infty}\norm{u-u_{i_k}}_{L^q(0,T;W^{1,q}(U))}\geq \eps.
\end{displaymath}
The first part of the theorem applies to this subsequence, and we get
\begin{displaymath}
  \liminf_{k\to \infty}\norm{u-u_{i_k}}_{L^q(0,T;W^{1,q}(U))}=0
\end{displaymath}
by uniqueness, which is a contradiction. The proof is complete.
\end{proof}

\def\cprime{$'$} \def\cprime{$'$}

\end{document}